\newtheorem{theorem}{Theorem}
\newtheorem{lemma}{Lemma}
\newtheorem{proposition}{Proposition}
\newtheorem{corollary}{Corollary}
\newtheorem{definition}{Definition}
\newtheorem{example}{Example}
\newcommand{\id}{\mathrm{id}}
\newcommand{\aut}{\mathrm{Aut}}
\newcommand{\inn}{\mathrm{Inn}}
\newcommand{\conj}{\mathrm{Conj}}
\newcommand{\orb}{\mathrm{Orb}}
\newcommand{\pqnd}{\mathcal{P}_{\mathsf{Qnd}}}
\newcommand{\p}{\mathcal{P}}
\newcommand{\depth}{\mathrm{depth}}
\newcommand{\simmcq}{\overset{\mathsf{MCQ}}{\sim}}
\newcommand{\ranglemcq}{\rangle_\mathsf{MCQ}}
\title{On connected component decompositions of quandles}
\author[Y. Iijima]{Yusuke Iijima}
\address[Y. Iijima]
{Institute of Mathematics, University of Tsukuba,\\
1-1-1 Tennoudai, Tsukuba, Ibaraki 305-8571, Japan.}
\email{yu-suke-i@math.tsukuba.ac.jp}
\author[T. Murao]{Tomo Murao}
\address[T. Murao]
{Institute of Mathematics, University of Tsukuba,\\
1-1-1 Tennoudai, Tsukuba, Ibaraki 305-8571, Japan.}
\email{t-murao@math.tsukuba.ac.jp}
\subjclass{57M25, 57M27}
\keywords{quandle, Alexander quandle, connected component decomposition}
\begin{document}

\begin{abstract}

We give a formula of the connected component decomposition of 
the Alexander quandle: 
$\mathbb{Z}[t^{\pm1}]/(f_1(t),\ldots, f_k(t))=\bigsqcup^{a-1}_{i=0}\orb(i)$,
where $a=\gcd (f_1(1),\ldots, f_k(1))$. 
We show that the connected component $\orb(i)$ is isomorphic to $\mathbb{Z}[t^{\pm1}]/J$
with an explicit ideal $J$.
By using this, we see how a quandle is decomposed into connected components
for some Alexander quandles.
We introduce a decomposition of a quandle into the disjoint union of maximal connected subquandles. 
In some cases, this decomposition is obtained by iterating a connected component decomposition. 
We also discuss
the maximal connected sub-multiple conjugation quandle decomposition.
\end{abstract}

\maketitle

\section{Introduction}

A quandle \cite{Joyce82,Matveev82} is an algebraic structure 
whose axioms are derived from the Reidemeister moves 
on oriented link diagrams.
An inner automorphism group of a quandle has 
an action to the quandle naturally.
We call an orbit of the quandle 
by the action its connected component, 
which is a subquandle.
A quandle is said to be connected 
if the action is transitive.
It is known that 
all connected quandles of prime square order are 
Alexander quandles \cite{Grana04}.

Any connected component of an Alexander quandle $M$ 
is isomorphic to $(1-t)M$.
S. Nelson \cite{Nelson03} proved that 
two finite Alexander quandles $M$ and $N$ of the same cardinality are isomorphic  
if and only if 
$(1-t)M$ and $(1-t)N$ are isomorphic as modules, 
and showed connectivity of some Alexander quandles.
The numbers of Alexander quandles and connected ones 
are listed up to order 16 in \cite{Nelson03,NG08}.
In this paper, 
for any $f_1(t),\ldots, f_k(t) \in \mathbb{Z}[t^{\pm1}]$, 
we show that the connected component decomposition of 
the Alexander quandle $\mathbb{Z}[t^{\pm1}]/(f_1(t),\ldots, f_k(t))$
is $\bigsqcup^{a-1}_{i=0}\orb(i)$ 
and that $\orb(i)$ is isomorphic to $\mathbb{Z}[t^{\pm1}]/J$
with an explicit ideal $J$, 
where $a=\gcd (f_1(1),\ldots, f_k(1))$.

A connected subquandle has played an important roll in colorings of a knot diagram.
However 
a connected component of a quandle is not a connected quandle in general.
In this paper, 
we introduce a decomposition of a quandle into 
the disjoint union of maximal connected subquandles, 
and 
show that 
it is obtained by iterating a connected component decomposition 
when the quandle is finite, 
where we note that 
the decomposition of a finite quandle 
obtained by iterating a connected component decomposition 
was introduced in \cite{EGTY08,NW06}.
We also give examples of the decompositions of some quandles.
For example, 
we concretely determine the decompositions of 
the Alexander quandle $\mathbb{Z}[t^{\pm 1}]/(n_0,t+a)$ 
for any $n_0 \in \mathbb{Z}_{>0}$ and $a\in \mathbb{Z}$ 
and the dihedral quandle $R_m$ 
for any $m \in \mathbb{Z}_{\geq 0}$.

We also discuss 
the similar decomposition 
of a multiple conjugation quandle, 
which is an algebraic structure 
whose axioms are derived from the Reidemeister moves 
on diagrams of spatial trivalent graphs and handlebody-links.

This paper is organized as follows.
In Section 2, 
we recall the definition of a quandle 
and its connected components.
In Section 3, 
we determine the connected component decomposition of 
the Alexander quandle $\mathbb{Z}[t^{\pm1}]/(f_1(t),\ldots, f_k(t))$.
In Section 4, 
we introduce a decomposition of a quandle into 
the disjoint union of maximal connected subquandles 
and show that 
it is obtained by iterating a connected component decomposition 
when the quandle is finite.
In Section 5, 
we give examples 
of the maximal connected subquandle decompositions 
of some quandles.
In Section 6, 
we recall the definition of 
a multiple conjugation quandle 
and introduce its properties we use in Section 7.
In Section 7, 
we discuss 
the similar decomposition 
of a multiple conjugation quandle.

\section{A quandle}

A \emph{quandle} $X$ is a non-empty set 
with a binary operation $* : X \times X \to X$ 
satisfying the following axioms.
\begin{itemize}
\item
For any $a \in X$, 
$a * a = a$.
\item
For any $a \in X$, 
the map $S_a : X \to X$ defined by $S_a(x)=x*a$ is a bijection.
\item
For any $a,b,c \in X$, 
$(a*b)*c=(a*c)*(b*c)$.
\end{itemize}

We give some examples of quandles.
Let $G$ be a group. 
We define a binary operation $*:G\times G\to G$ 
by $a*b=b^{-1}ab$ for any $a,b\in G$.
Then $G$ is a quandle, 
which is called a \emph{conjugation quandle} 
and denoted by $\mathrm{Conj}(G)$. 
The second example is obtained from a $\mathbb{Z}[t^{\pm 1}]$-module $M$.
We define a binary operation $*:M\times M\to M$ 
by $a*b=ta+(1-t)b$ for any $a,b\in M$.
Then $M$ is a quandle, 
which is called an \emph{Alexander quandle}.
The third example is a \emph{dihedral quandle} $R_m := \mathbb{Z}/m\mathbb{Z}$ 
for any $m\in\mathbb{Z}_{\geq 0}$.
We define a binary operation $*:R_m\times R_m\to R_m$ 
by $a*b=2b-a$ for any $a,b\in R_m$.
Then $R_m$ is a quandle.

Let $(X,*)$ be a quandle.
In this paper, 
we write $a_1*a_2*\cdots *a_n$ 
for $(\cdots ((a_1*a_2)*a_3)*\cdots *a_n)$ simply. 
For any $a\in X$ and $n\in \mathbb{Z}_{\geq 0}$, 
$S^{n}_a:X\to X$ and $S^{-n}_a:X\to X$
are $n$-th functional powers of $S_a$ and $S^{-1}_a$ respectively.
For any $a,b\in X$ and $n \in \mathbb{Z}$, 
we define a binary operation $*^n : X\times X\to X$ 
by $a*^nb=S^n_b(a)$.
Then $(X,*^{n})$ is also a quandle.
We define the \emph{type} of $X$ 
by the minimal number of $n$ 
satisfying $a*^nb=a$ for any $a,b\in X$.

Let $(X,*_X)$ and $(Y,*_Y)$ be quandles. 
A \emph{homomorphism} $\phi : X \to Y$ is a map from $X$ to $Y$ 
satisfying $\phi(x*_X y)=\phi(x)*_Y \phi(y)$ for any $x,y\in X$. 
We call a bijective homomorphism an \emph{isomorphism}. 
$X$ and $Y$ are \emph{isomorphic}, 
denoted $X\cong Y$, 
if there exists an isomorphism from $X$ to $Y$.
We call an isomorphism from $X$ to $X$ an \emph{automorphism} of $X$. 
For any $a\in X$ and $n\in \mathbb{Z}$,
the map $S_a^n : X \to X$ is an automorphism of $X$.

%

Let $(X,*)$ be a quandle. 
A non-empty subset $Y$ of $X$ is called a \emph{subquandle} of $X$ 
if $Y$ itself is a quandle under $*$. 
For any subset $Y$ of $X$, 
$Y$ is a subquandle of $X$ 
if and only if 
$a*b,a*^{-1}b\in Y$ for any $a,b\in Y$.

%

For any subset $A$ of $X$,
the minimal subquandle of $X$ including $A$, 
denoted by $\langle A \rangle$, 
is called the subquandle generated by $A$, 
that is, 
\begin{eqnarray*}
\langle A \rangle =\{ a*^{k_1}x_1*^{k_2}\dots *^{k_n}x_n \in X\,|\, a,x_1,\dots,x_n\in A ,\  k_1,k_2,\dots ,k_n\in \mathbb{Z} \}. 
\end{eqnarray*}

%

Let $X$ be a quandle.
All automorphisms of $X$ form a group 
under composition of morphisms: $f \cdot g :=g \circ f$.
This group is called the \emph{automorphism group} of $X$ 
and denoted by $\aut (X)$.
For a subset $A$ of $X$, 
we denote by  $\inn (A)$ 
a subgroup of $\aut (X)$ generated by $\{ S_a \mid a \in A \}$.
In particular, $\inn (X)$ is called the \emph{inner automorphism group} of $X$. 
For any $a \in X$ and $g \in \inn (A)$, 
we define an action of $\inn (A)$ on $X$ 
by $a \cdot g = g(a)$.
We say that $X$ is a \emph{connected quandle} 
when the action is transitive.
In general, 
an orbit of $X$ by the action is called 
a \emph{connected component} of $X$ 
or 
an orbit of $X$ simply, 
and $X=\bigsqcup_{i \in I} X_i$ is called 
the \emph{connected component decomposition} of $X$ 
when $X_i$ is a connected component of $X$ for any $i \in I$.
In general, a connected component of $X$ is a subquandle of $X$.
We denote by $\orb_X(a)$ or $\orb(a)$ 
the orbit of $X$ containing $a$.

\begin{example}\label{conjugation}
For any group $G$,
a connected component of $\conj (G)$ 
coincides with one of conjugacy classes of $G$.
\end{example}

In the following,
we give a well-known fact with a proof.

\begin{lemma}\label{conn. comp. iso.}
Let $M$ be an Alexander quandle.
Then any connected component of $M$ 
is isomorphic to $(1-t)M$.
\end{lemma}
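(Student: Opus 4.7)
The plan is to identify the orbit $\orb(a)$ explicitly as the coset $a+(1-t)M$ and then build an isomorphism from $(1-t)M$ to this coset by translation.

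First, I would compute the basic effect of a generator of $\inn(M)$. For any $a,b\in M$,
\[
S_b(a) = ta+(1-t)b = a + (1-t)(b-a),
\]
so $S_b(a)-a \in (1-t)M$. Similarly, solving $S_b(x) = a$ gives $S_b^{-1}(a)-a = (1-t)(a-b)/t \cdot (-1) \cdot \ldots$; more cleanly, one writes $S_b^{-1}(a) = t^{-1}a - t^{-1}(1-t)b$, whence $S_b^{-1}(a)-a = -t^{-1}(1-t)(a-b) \in (1-t)M$ as well. Iterating along a word $g = S_{b_n}^{\epsilon_n}\cdots S_{b_1}^{\epsilon_1}$ and telescoping shows $g(a)-a \in (1-t)M$, so $\orb(a)\subseteq a+(1-t)M$.

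Next I would verify the reverse inclusion, which in fact requires only a single application of an $S_b$. Given any $y\in(1-t)M$, write $y=(1-t)c$ for some $c\in M$, and set $b=a+c$. Then $S_b(a)-a = (1-t)(b-a) = (1-t)c = y$, so $a+y\in\orb(a)$. Hence $\orb(a) = a+(1-t)M$.

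Finally I would define $\phi\colon (1-t)M\to\orb(a)$ by $\phi(x)=a+x$; this is a bijection of sets. Noting that $(1-t)M$ is closed under the Alexander operation (since $(1-t)(tm+(1-t)n) = t(1-t)m+(1-t)(1-t)n$), so that it is a subquandle of $M$, the homomorphism check is the direct calculation
\[
\phi(x)*\phi(y) = t(a+x)+(1-t)(a+y) = a+tx+(1-t)y = \phi(x*y).
\]
Hence $\phi$ is a quandle isomorphism $(1-t)M \cong \orb(a)$.

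I do not anticipate any serious obstacle: the entire argument rests on the algebraic identity $S_b(a)-a = (1-t)(b-a)$, which makes both the containment $\orb(a)\subseteq a+(1-t)M$ and its converse immediate, and makes the translation $x\mapsto a+x$ automatically a quandle map. The only point requiring a moment of care is the inverse direction, where one needs to know that already the single generator $S_b$ (with $b$ varying) sweeps out the full coset $a+(1-t)M$, so that no further elements are added by longer words.
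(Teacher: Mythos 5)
Your proof is correct and follows essentially the same route as the paper: both identify each orbit as a coset of $(1-t)M$ via the identity $S_b(a)-a=(1-t)(b-a)$ and realize the isomorphism as translation by $a$ (the paper phrases this as first showing $\orb(0)=(1-t)M$ and then using $\phi_a(x)=x+a$, while you work with $\orb(a)=a+(1-t)M$ directly and even handle the $S_b^{-1}$ steps more explicitly). The only blemish is a sign slip in your formula for $S_b^{-1}(a)-a$, which should be $t^{-1}(1-t)(a-b)$, but this does not affect the conclusion that it lies in $(1-t)M$.
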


\begin{proof}
Since $0*x=(1-t)x$ for any $x\in M$,
it follows that $(1-t)M \subset \orb (0)$.
On the other hand, 
for any $z\in \orb (0)$, 
there exist $y_1,\dots ,y_n \in M$ 
such that $z=0*y_1*y_2*\cdots *y_n=(1-t)y_1*y_2*\cdots *y_n$.
Since for any $x,y \in M$, 
$(1-t)x*y=t(1-t)x+(1-t)y=(1-t)(tx+y)$, 
we have $z\in (1-t)M$, 
that is, $(1-t)M \supset \orb (0)$.
Therefore $(1-t)M= \orb (0)$.
Next, we define the map $\phi _a: \orb (0) \to \orb (a)$ 
by $\phi _a(x)=x+a$ for any $a \in M$.
For any $(1-t)x\in \orb(0)$ and $a\in M$,
$\phi _a((1-t)x)=(1-t)x+a=ta+(1-t)(x+a) = a*(x+a) \in \orb (a)$.
Hence $\phi _a$ is well-defined.
For any $x,y\in \orb (0)$ and $a\in M$,
$\phi _a(x*y)=(tx+(1-t)y)+a=t(x+a)+(1-t)(y+a)=\phi _a (x)*\phi _a(y)$.
Hence $\phi _a$ is a homomorphism.
Similarly, the map $\psi _a: \orb (a) \to \orb (0)$ 
defined by $\psi _a(x)=x-a$ is a homomorphism 
for any $a\in M$.
Since $\phi _a\circ \psi _a=\id _{\orb (a)}$ 
and $\psi _a\circ \phi _a=\id _{\orb (0)}$, 
$\phi _a$ is an isomorphism.
Therefore $\orb (0)$ and $\orb (a)$ are isomorphic for any $a \in M$.
\end{proof}

\section{The connected component decomposition of an Alexander quandle}

In this section, we show that the connected component decomposition of 
the Alexander quandle $\mathbb{Z}[t^{\pm 1}]/(f_1(t),f_2(t),\dots ,f_k(t))$ 
is $\bigsqcup^{a-1}_{i=0}\orb(i)$, where $(f_1(t),f_2(t), \dots ,f_k(t))$ 
is an ideal of $\mathbb{Z}[t^{\pm 1}]$ 
generated by Laurent polynomials $f_1(t),f_2(t),\dots ,f_k(t) \in \mathbb{Z}[t^{\pm 1}]$, 
and $a=\gcd(f_1(1),f_2(1),\dots ,f_k(1))$.
Furthermore, we determine the form of all connected components of 
$\mathbb{Z}[t^{\pm 1}]/(f_1(t),f_2(t), \dots ,f_k(t))$.

For any $\alpha (t) \in \mathbb{Z}[t^{\pm 1}]$,
we define $C_{\alpha (t)}$ by
\begin{eqnarray*}
C_{\alpha (t)}=\{ \alpha(t) + a_1f_1(t)+a_2f_2(t)+\dots +a_kf_k(t) \, |\, a_1,a_2, \dots ,a_k\in \mathbb{Z}[t^{\pm 1}] \} \subset \mathbb{Z}[t^{\pm 1}].
\end{eqnarray*}
Then for any $[\alpha(t)]\in \mathbb{Z}[t^{\pm 1}]/(f_1(t),f_2(t),\dots ,f_k(t))$,
$C_{[\alpha(t)]}:=C_{\alpha(t)}$ is well-defined.
In this paper,
we often write $\alpha (t)$ for $[\alpha(t)]\in \mathbb{Z}[t^{\pm 1}]/(f_1(t),f_2(t), \dots ,f_k(t))$ simply.
For any $D\subset \mathbb{Z}[t^{\pm 1}]$, 
we define $D(1)$ 
by $D(1)=\{ g(1) \,|\, g(t)\in D \}\subset \mathbb{Z}$.
It is easy to see that
\begin{eqnarray*}
C_{[\alpha(t)]}(1) = \{ \alpha(1) + a_1f_1(1)+a_2f_2(1)+\dots +a_kf_k(1) \, |\, a_1,a_2, \dots ,a_k\in \mathbb{Z} \}
\end{eqnarray*} 
for any $[\alpha (t)] \in \mathbb{Z}[t^{\pm 1}]/(f_1(t),f_2(t),\dots ,f_k(t))$.

\begin{lemma}\label{qnd. coeff.}
For any elements $[\alpha (t)]$ and $[\beta(t)]$ 
of the Alexander quandle $\mathbb{Z}[t^{\pm 1}]/(f_1(t),f_2(t),\dots ,f_k(t))$,
it follows that 
$C_{[\alpha(t)]*[\beta(t)]}(1)=C_{[\alpha(t)]}(1)$.
\end{lemma}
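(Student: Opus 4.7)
The plan is a direct computation using the definition of the Alexander quandle operation. In the quandle $\mathbb{Z}[t^{\pm 1}]/(f_1(t),\ldots,f_k(t))$ we have $[\alpha(t)]*[\beta(t)] = [t\alpha(t)+(1-t)\beta(t)]$, so by the definition of $C_{[\cdot]}$,
\begin{equation*}
C_{[\alpha(t)]*[\beta(t)]} = \{t\alpha(t) + (1-t)\beta(t) + a_1 f_1(t) + \cdots + a_k f_k(t) \mid a_1,\ldots,a_k \in \mathbb{Z}[t^{\pm 1}]\}.
\end{equation*}

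Next I would evaluate at $t=1$. Because $(1-t)$ vanishes at $t=1$, the $\beta$-contribution disappears and we obtain
\begin{equation*}
C_{[\alpha(t)]*[\beta(t)]}(1) = \{\alpha(1) + a_1(1) f_1(1) + \cdots + a_k(1) f_k(1) \mid a_i \in \mathbb{Z}[t^{\pm 1}]\}.
\end{equation*}
As $a_i$ ranges over $\mathbb{Z}[t^{\pm 1}]$, the value $a_i(1)$ ranges over all of $\mathbb{Z}$, so this set equals
\begin{equation*}
\{\alpha(1) + b_1 f_1(1) + \cdots + b_k f_k(1) \mid b_1,\ldots,b_k \in \mathbb{Z}\},
\end{equation*}
which by the formula stated just before the lemma is exactly $C_{[\alpha(t)]}(1)$.

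I do not anticipate any genuine obstacle here: the content of the lemma is essentially the observation that substituting $t=1$ collapses the Alexander quandle operation $ta+(1-t)b$ to its first argument, so the set of integers associated to $[\alpha(t)]*[\beta(t)]$ is forced to coincide with that of $[\alpha(t)]$. The only point requiring a brief mention is that $C_{[\alpha(t)]*[\beta(t)]}$ is well-defined as a subset (so the computation above does not depend on the chosen representative $t\alpha(t)+(1-t)\beta(t)$), which is already granted by the preceding paragraph.
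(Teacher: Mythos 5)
Your proposal is correct and follows essentially the same route as the paper: both evaluate the representative $t\alpha(t)+(1-t)\beta(t)$ at $t=1$, observe that the $(1-t)\beta(t)$ term vanishes, and identify the resulting set with $C_{[\alpha(t)]}(1)$ via the formula stated before the lemma. No gaps.
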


\begin{proof}
Since $[\alpha(t)]*[\beta(t)]=[t\alpha(t)+(1-t)\beta(t)]$,
we have 
\begin{eqnarray*}
C_{[\alpha(t)]*[\beta(t)]}(1)
&=&\{ 1\cdot\alpha(1)+(1-1)\beta(1) + a_1f_1(1)+\dots +a_kf_k(1) \, |\, a_1,a_2, \dots ,a_k\in \mathbb{Z} \} \\
&=&\{ \alpha(1)+0\cdot \beta(1) + a_1f_1(1)+\dots +a_kf_k(1) \, |\, a_1,a_2, \dots ,a_k\in \mathbb{Z} \} \\
&=& C_{[\alpha (t)]}(1). 
\end{eqnarray*}
\end{proof}

\begin{lemma}\label{orb. coeff.}
For any elements $[\alpha (t)]$ and $[\beta(t)]$ 
of the Alexander quandle $\mathbb{Z}[t^{\pm 1}]/(f_1(t),f_2(t),\dots ,f_k(t))$,
it follows that 
$C_{[\alpha (t)]}(1)=C_{[\beta (t)]}(1)$ if and only if $\orb ([\alpha(t)])=\orb ([\beta(t)])$.
\end{lemma}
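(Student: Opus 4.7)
The plan is to establish the two directions separately, using Lemmas \ref{qnd. coeff.} and \ref{conn. comp. iso.}.

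For the implication $\orb([\alpha(t)]) = \orb([\beta(t)]) \Rightarrow C_{[\alpha(t)]}(1) = C_{[\beta(t)]}(1)$, I would first extend Lemma \ref{qnd. coeff.} to the inverse operation: if $[\gamma(t)] = [\alpha(t)] *^{-1} [\beta(t)]$, then by definition $[\gamma(t)] * [\beta(t)] = [\alpha(t)]$, and Lemma \ref{qnd. coeff.} gives $C_{[\alpha(t)]}(1) = C_{[\gamma(t)] * [\beta(t)]}(1) = C_{[\gamma(t)]}(1)$. Since every element of $\orb([\alpha(t)])$ is reached from $[\alpha(t)]$ by a finite word in $*$ and $*^{-1}$, an induction on the length of such a word shows that $C_{(\cdot)}(1)$ is constant on the orbit.

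For the converse, the hypothesis $C_{[\alpha(t)]}(1) = C_{[\beta(t)]}(1)$ is equivalent to $\alpha(1) - \beta(1) = \sum_{i=1}^{k} c_i f_i(1)$ for some integers $c_1, \ldots, c_k$. The Laurent polynomial $h(t) := \alpha(t) - \beta(t) - \sum_{i=1}^{k} c_i f_i(t)$ then satisfies $h(1) = 0$; after multiplying by a suitable power $t^N$ to land in $\mathbb{Z}[t]$ and applying the factor theorem to pull out $(1-t)$, one obtains $h(t) \in (1-t)\mathbb{Z}[t^{\pm 1}]$. Passing to $M := \mathbb{Z}[t^{\pm 1}]/(f_1(t), \ldots, f_k(t))$ yields $[\alpha(t)] - [\beta(t)] \in (1-t) M$. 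Now Lemma \ref{conn. comp. iso.}, more precisely its proof, identifies $\orb(0)$ with $(1-t)M$ and exhibits an isomorphism $\phi_{[\beta(t)]} \colon \orb(0) \to \orb([\beta(t)])$ sending $x \mapsto x + [\beta(t)]$. Therefore $[\alpha(t)] = \phi_{[\beta(t)]}([\alpha(t)] - [\beta(t)]) \in \orb([\beta(t)])$, which forces $\orb([\alpha(t)]) = \orb([\beta(t)])$.

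The only mildly delicate step is the Laurent-polynomial fact that $h(1) = 0$ implies $h(t) \in (1-t)\mathbb{Z}[t^{\pm 1}]$, which I would handle by clearing the negative powers of $t$ and applying the factor theorem in $\mathbb{Z}[t]$. Everything else is bookkeeping, reducing the orbit equality to the single coset membership $[\alpha(t)] - [\beta(t)] \in (1-t)M$ and matching it against the congruence condition on the constant terms.
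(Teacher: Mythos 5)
Your proof is correct, and the forward direction (orbit equality implies equality of $C_{(\cdot)}(1)$) is essentially the paper's argument: the paper likewise reduces to Lemma \ref{qnd. coeff.} applied along a word in the operations $*^{\epsilon_i}$, which implicitly requires the same extension to $*^{-1}$ that you spell out. For the converse, however, you take a genuinely different route. The paper constructs an explicit element $[\gamma(t)]$ with $[\alpha(t)]*[\gamma(t)]=[\beta(t)]$: writing $\alpha(t)=(1-t)\widetilde{\alpha}(t)+\alpha(1)$, $\beta(t)=(1-t)\widetilde{\beta}(t)+\beta(1)$, $f_i(t)=(1-t)\widetilde{f_i}(t)+f_i(1)$, it sets $\gamma(t)=\widetilde{\beta}(t)+\beta(1)-t\widetilde{\alpha}(t)+\sum_i l_i t\widetilde{f_i}(t)$ and verifies by a direct computation that $[(1-t)\gamma(t)]=[\beta(t)-t\alpha(t)]$. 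You instead reduce the hypothesis to the single coset membership $[\alpha(t)]-[\beta(t)]\in(1-t)M$ (via the factor-theorem observation that $h(1)=0$ forces $h(t)\in(1-t)\mathbb{Z}[t^{\pm1}]$) and then invoke the facts, established in the proof of Lemma \ref{conn. comp. iso.}, that $\orb(0)=(1-t)M$ and that $x\mapsto x+[\beta(t)]$ carries $\orb(0)$ into $\orb([\beta(t)])$. Your version is shorter and more conceptual, reusing structure already proved rather than recomputing it; the paper's version is self-contained and produces the connecting element $[\gamma(t)]$ explicitly, which is occasionally useful in later computations. One small caveat: since you rely on the proof rather than the statement of Lemma \ref{conn. comp. iso.}, a written-up version should either cite those two intermediate facts explicitly or restate them, but this is a matter of exposition, not a gap.
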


\begin{proof}
Suppose that $C_{[\alpha (t)]}(1)=C_{[\beta (t)]}(1)$.
Then we have 
\begin{eqnarray*}
&&\{ \alpha(1) +l_1f_1(1)+l_2f_2(1)+\dots +l_kf_k(1)\, |\, l_1,l_2,\dots ,l_k\in \mathbb{Z} \}\\
&&=C_{[\alpha (t)]}(1)\\
&&=C_{[\beta (t)]}(1)\\
&&=\{ \beta (1) +l_1f_1(1)+l_2f_2(1)+\dots +l_kf_k(1)\, |\, l_1,l_2,\dots ,l_k\in \mathbb{Z} \}.
\end{eqnarray*}
Hence there exist $l_1,l_2,\dots ,l_k\in \mathbb{Z}$ such that 
$\alpha(1)=\beta(1)+l_1f_1(1)+l_2f_2(1)+\dots +l_kf_k(1)$.
We put $\widetilde{\alpha}(t)$ and $\widetilde{\beta}(t)$ 
by $\alpha(t)=(1-t)\widetilde{\alpha}(t)+\alpha(1)$ 
and $\beta(t)=(1-t)\widetilde{\beta}(t)+\beta(1)$ respectively.
We also put $\widetilde{f_i}(t)$ 
by $f_i(t)=(1-t)\widetilde{f_i}(t)+f_i(1)$ for any $i=1,2,\dots ,k$ 
and $\gamma(t):=\widetilde{\beta}(t)+\beta(1)-t\widetilde{\alpha}(t)+\sum^{k}_{i=1}l_it\widetilde{f_i}(t)$.
Since $[\sum^{k}_{i=1}l_itf_i(t)]=[0]$ 
in $\mathbb{Z}[t^{\pm 1}]/(f_1(t),f_2(t),\dots ,f_k(t))$, 
we have
\begin{eqnarray*}
&& [(1-t)\gamma(t)]\\
&&=[(1-t)(\widetilde{\beta}(t)+\beta(1)-t\widetilde{\alpha}(t)+\sum^{k}_{i=1}l_it\widetilde{f_i}(t))]\\
&&=[(1-t)\widetilde{\beta}(t)+(1-t)\beta(1)-t(1-t)\widetilde{\alpha}(t)+\sum^{k}_{i=1}l_it(1-t)\widetilde{f_i}(t)]\\
&&=[(1-t)\widetilde{\beta}(t)+(1-t)\beta(1)-t(1-t)\widetilde{\alpha}(t)+\sum^{k}_{i=1}l_it(1-t)\widetilde{f_i}(t)-\sum^{k}_{i=1}l_itf_i(t)]\\
&&=[(1-t)\widetilde{\beta}(t)+\beta(1)-t\beta(1)-t(1-t)\widetilde{\alpha}(t)+\sum^{k}_{i=1}l_it((1-t)\widetilde{f_i}(t)-f_i(t))]\\
&&=[\beta(t)-t(1-t)\widetilde{\alpha}(t)-t\beta(1)+\sum^{k}_{i=1}l_it(-f_i(1))]\\
&&=[\beta(t)-t(1-t)\widetilde{\alpha}(t)-t\alpha(1)]\\
&&=[\beta(t)-t\alpha(t)].
\end{eqnarray*}
Hence $[\alpha(t)]*[\gamma(t)]=[t \alpha (t)+(1-t) \gamma(t)]=[\beta(t)]$, 
which implies $\orb([\alpha(t)])=\orb([\beta(t)])$.
Next, suppose that $\orb([\alpha(t)])=\orb([\beta(t)])$.
There exist $[\gamma_1(t)],[\gamma_2(t)],\dots ,[\gamma_l(t)]\in\mathbb{Z}[t^{\pm 1}]/(f_1(t),f_2(t),\dots ,f_k(t))$ 
and $\epsilon_1,\epsilon_2,\dots ,\epsilon_l\in\mathbb{Z}$ such that 
$[\alpha(t)]*^{\epsilon_1}[\gamma_1(t)]*^{\epsilon_2}\dots *^{\epsilon_l}[\gamma_l(t)]=[\beta(t)]$.
By Lemma \ref{qnd. coeff.}, we have $C_{[\alpha (t)]}(1)=C_{[\beta (t)]}(1)$.
\end{proof}



Then we have the following lemma.

\begin{lemma}\label{orb. gcd}
For any elements $[\alpha (t)]$ and $[\beta(t)]$ 
of the Alexander quandle $\mathbb{Z}[t^{\pm 1}]/(f_1(t),f_2(t),\dots ,f_k(t))$,
it follows that 
$\orb ([\alpha(t)])=\orb ([\beta(t)])$ 
if and only if 
$\alpha (1)\equiv\beta (1)\mod a$, 
where $a=\gcd(f_1(1),f_2(1),\dots ,f_k(1))$.
\end{lemma}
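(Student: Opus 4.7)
The plan is to reduce this lemma immediately to Lemma \ref{orb. coeff.}, so that the only remaining work is translating the equality $C_{[\alpha(t)]}(1)=C_{[\beta(t)]}(1)$ into a congruence mod $a$. By Lemma \ref{orb. coeff.}, the statement $\orb([\alpha(t)])=\orb([\beta(t)])$ is equivalent to $C_{[\alpha(t)]}(1)=C_{[\beta(t)]}(1)$, so the whole lemma follows once I identify these subsets of $\mathbb{Z}$ concretely.

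The key observation is purely ring-theoretic: the subset
\[
C_{[\alpha(t)]}(1)=\{\alpha(1)+a_1f_1(1)+\cdots +a_kf_k(1)\mid a_1,\dots,a_k\in\mathbb{Z}\}
\]
is exactly the coset $\alpha(1)+(f_1(1),\dots,f_k(1))\mathbb{Z}$ in $\mathbb{Z}$. Since $\mathbb{Z}$ is a PID, the ideal $(f_1(1),\dots,f_k(1))$ coincides with $a\mathbb{Z}$, where $a=\gcd(f_1(1),\dots,f_k(1))$. Hence $C_{[\alpha(t)]}(1)=\alpha(1)+a\mathbb{Z}$, and likewise $C_{[\beta(t)]}(1)=\beta(1)+a\mathbb{Z}$.

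Two cosets of $a\mathbb{Z}$ in $\mathbb{Z}$ coincide if and only if their representatives are congruent modulo $a$, so $C_{[\alpha(t)]}(1)=C_{[\beta(t)]}(1)$ is equivalent to $\alpha(1)\equiv\beta(1)\pmod{a}$. Combining this equivalence with Lemma \ref{orb. coeff.} gives the result.

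There is no real obstacle here: the content of the lemma is essentially packaged inside Lemma \ref{orb. coeff.}, and the remaining step is just the standard identification of the ideal generated by a finite set of integers with the ideal generated by their gcd. The only care needed is to be explicit that $C_{[\alpha(t)]}(1)$ is indeed a coset of the ideal $(f_1(1),\dots,f_k(1))\subset\mathbb{Z}$ rather than of some larger subset, which is immediate from the description given just before Lemma \ref{qnd. coeff.}.
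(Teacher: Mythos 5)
Your proposal is correct and follows essentially the same route as the paper: both reduce the statement to Lemma \ref{orb. coeff.} and then identify $C_{[\alpha(t)]}(1)$ with the coset $\alpha(1)+a\mathbb{Z}$ via $f_1(1)\mathbb{Z}+\cdots+f_k(1)\mathbb{Z}=a\mathbb{Z}$. No gaps.
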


\begin{proof}
Suppose that $\orb ([\alpha(t)])=\orb ([\beta(t)])$.
By Lemma \ref{orb. coeff.}, 
$C_{[\alpha (t)]}(1)=C_{[\beta (t)]}(1)$.
Since $f_1(1)\mathbb{Z}+f_2(1)\mathbb{Z}+\cdots +f_k(1)\mathbb{Z}=a\mathbb{Z}$,
we have 
\begin{eqnarray*}
\{ \alpha(1) +la\, |\, l \in \mathbb{Z} \}
&&= \{ \alpha(1) +l_1f_1(1)+l_2f_2(1)+\dots +l_kf_k(1)\, |\, l_1,l_2,\dots ,l_k\in \mathbb{Z} \}\\
&&= C_{[\alpha (t)]}(1)\\
&&= C_{[\beta (t)]}(1)\\
&&= \{ \beta (1) +l_1f_1(1)+l_2f_2(1)+\dots +l_kf_k(1)\, |\, l_1,l_2,\dots ,l_k\in \mathbb{Z} \}\\
&&=\{ \beta (1) +la\, |\, l \in \mathbb{Z} \}.
\end{eqnarray*}
Hence we obtain $\alpha (1) \equiv \beta (1)\mod a$.
On the other hand, 
suppose that $\alpha (1)\equiv\beta (1)\mod a$.
Since $f_1(1)\mathbb{Z}+f_2(1)\mathbb{Z}+\cdots +f_k(1)\mathbb{Z}=a\mathbb{Z}$,
we have 
\begin{eqnarray*}
C_{[\alpha (t)]}(1)
&&= \{ \alpha(1) +l_1f_1(1)+l_2f_2(1)+\dots +l_kf_k(1)\, |\, l_1,l_2,\dots ,l_k\in \mathbb{Z} \}\\
&&= \{ \alpha(1) +la\, |\, l \in \mathbb{Z} \}\\
&&=\{ \beta (1) +la\, |\, l \in \mathbb{Z} \}\\
&&= \{ \beta (1) +l_1f_1(1)+l_2f_2(1)+\dots +l_kf_k(1)\, |\, l_1,l_2,\dots ,l_k\in \mathbb{Z} \}\\
&&= C_{[\beta (t)]}(1).
\end{eqnarray*}
By Lemma \ref{orb. coeff.}, 
we obtain $\orb ([\alpha(t)])=\orb ([\beta(t)])$.
\end{proof}

Then we have the following theorem.

\begin{theorem}\label{Alex. ccd}
Let $M$ be the Alexander quandle 
$\mathbb{Z}[t^{\pm 1}]/(f_1(t),f_2(t),\dots ,f_k(t))$ 
and let $a=\gcd(f_1(1),f_2(1), \dots ,f_k(1))$.
Then the following hold. 
\begin{enumerate}
\item 
The connected component decomposition of $M$ is given by 
\[ M=\bigsqcup^{a-1}_{i=0}\orb (i), \]
where 
\begin{align*}
\orb(i)
& =\{ [g(t)] \mid g(t)\in \mathbb{Z}[t^{\pm 1}],~ g(1)\equiv i\mod a \}\\
& =\{ [i+(1-t)g(t)+aj] \mid g(t)\in\mathbb{Z}[t^{\pm 1}],~ j\in \mathbb{Z} \}.
\end{align*}

\item 
For any $j=0,1,\dots ,a-1$, 
it follows that 
\[ \orb(j) \cong  \mathbb{Z}[t^{\pm 1}]/((f_1(t),f_2(t),\dots ,f_k(t))+I), \] 
where we define $\widetilde{f_i}(t)$ 
by $f_i(t)=(1-t)\widetilde{f_i}(t)+f_i(1)$ 
for any $i=1,2,\dots ,k$, 
and 
$I=\{ \sum_{i=1}^k a_i\widetilde{f_i}(t) \mid 
a_i \in \mathbb{Z}[t^{\pm 1}],~ \sum_{i=1}^k a_if_i(1)=0 \}$.
\end{enumerate}
\end{theorem}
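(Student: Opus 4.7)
The plan is to derive (1) directly from Lemma \ref{orb. gcd}, and to prove (2) by combining Lemma \ref{conn. comp. iso.} with an explicit surjection whose kernel we compute.

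For (1), Lemma \ref{orb. gcd} gives $\orb([\alpha(t)])=\orb([\beta(t)])$ iff $\alpha(1)\equiv\beta(1)\pmod{a}$, so the orbits are in bijection with residue classes modulo $a$; choosing the representatives $0,1,\ldots,a-1$ yields the disjoint decomposition $M=\bigsqcup_{i=0}^{a-1}\orb(i)$, and the first description of $\orb(i)$ is then just a restatement of Lemma \ref{orb. gcd}. For the second description, I would write any representative as $g(t)=(1-t)\widetilde{g}(t)+g(1)$ and observe that $g(1)\equiv i\pmod{a}$ is equivalent to $g(1)=i+aj$ for some $j\in\mathbb{Z}$.

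For (2), by Lemma \ref{conn. comp. iso.} every $\orb(j)$ is isomorphic to $\orb(0)=(1-t)M$, so it suffices to exhibit an isomorphism $(1-t)M\cong\mathbb{Z}[t^{\pm1}]/((f_1(t),\ldots,f_k(t))+I)$. The natural candidate is $\psi\colon\mathbb{Z}[t^{\pm1}]\to(1-t)M$ defined by $\psi(g(t))=[(1-t)g(t)]$, which is a surjective $\mathbb{Z}[t^{\pm1}]$-module homomorphism and hence a surjective quandle homomorphism for the Alexander quandle structures on both sides. The task then reduces to identifying $\ker\psi=\{g\mid(1-t)g\in(f_1(t),\ldots,f_k(t))\}$ with $(f_1(t),\ldots,f_k(t))+I$. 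The inclusion $\supseteq$ is a direct computation using $(1-t)\widetilde{f_i}(t)=f_i(t)-f_i(1)$ together with the defining relation $\sum a_if_i(1)=0$.

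For the reverse inclusion, given $(1-t)g=\sum a_i(t)f_i(t)$, substituting $f_i(t)=(1-t)\widetilde{f_i}(t)+f_i(1)$ gives $(1-t)(g-\sum a_i\widetilde{f_i})=\sum a_i(t)f_i(1)$, so $\sum a_i(t)f_i(1)=(1-t)h(t)$ for some $h(t)$, and in particular $g=\sum a_i\widetilde{f_i}+h(t)$. Since every $f_i(1)$ is divisible by $a$, so is $(1-t)h(t)$, and $1-t$ is a non-zero-divisor in $(\mathbb{Z}/a\mathbb{Z})[t^{\pm1}]$ (easily checked coefficient-by-coefficient), which forces $h(t)\in a\mathbb{Z}[t^{\pm1}]$. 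The main obstacle is upgrading the scalar vanishing of $\sum a_i f_i(1)$ at $t=1$ to the full polynomial identity $\sum a_i(t)f_i(1)=0$ demanded by the definition of $I$. I would handle this by taking a B\'ezout expression $a=\sum\alpha_if_i(1)$ and replacing each $a_i(t)$ by $a_i(t)-(1-t)\alpha_ih(t)/a$: this cancels $\sum a_i(t)f_i(1)$ exactly, while, via the congruence $(1-t)\widetilde{f_i}(t)\equiv-f_i(1)\pmod{(f_1,\ldots,f_k)}$, it shifts $\sum a_i\widetilde{f_i}$ by precisely $h(t)$ modulo $(f_1,\ldots,f_k)$, which is exactly the correction needed to place $g$ in $(f_1(t),\ldots,f_k(t))+I$.
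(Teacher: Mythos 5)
Your proposal is correct and follows the paper's own strategy almost exactly: part (1) is read off from Lemma \ref{orb. gcd}, and part (2) uses the surjection $g(t)\mapsto[(1-t)g(t)]$ onto $(1-t)M$ together with Lemma \ref{conn. comp. iso.}, reducing everything to showing that the kernel equals $(f_1(t),\dots,f_k(t))+I$. The one place you genuinely diverge is the reverse inclusion of that kernel computation. You correctly spot the real subtlety --- the definition of $I$ demands $\sum a_if_i(1)=0$ as a Laurent-polynomial identity, not merely vanishing at $t=1$ --- and your fix works: since $(1-t)h(t)=\sum a_i(t)f_i(1)\in a\mathbb{Z}[t^{\pm1}]$ and $1-t$ is a non-zero-divisor in $(\mathbb{Z}/a\mathbb{Z})[t^{\pm1}]$, one has $h\in a\mathbb{Z}[t^{\pm1}]$, and the replacement $a_i\mapsto a_i-(1-t)\alpha_ih/a$ with $a=\sum\alpha_if_i(1)$ yields $g=\sum b_i\widetilde{f_i}+(h/a)\sum\alpha_if_i(t)\in I+(f_1(t),\dots,f_k(t))$. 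The paper gets the same conclusion more cheaply: it splits each coefficient as $h_i(t)=(1-t)\widetilde{h_i}(t)+h_i(1)$, absorbs the $(1-t)\widetilde{h_i}(t)f_i(t)$ part into the ideal $(f_1(t),\dots,f_k(t))$, and is left with $\sum h_i(1)\widetilde{f_i}(t)$ whose coefficients are the integers $h_i(1)$, so $\sum h_i(1)f_i(1)=0$ is automatically the required identity. Both routes are valid; yours costs an extra B\'ezout argument and the non-zero-divisor observation, which the paper's choice of decomposition renders unnecessary.
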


\begin{proof}
\begin{enumerate}
\item 
By Lemma \ref{orb. gcd}, 
$M=\bigsqcup^{a-1}_{i=0}\orb (i)$ is the connected component decomposition of $M$, and we have 
$\orb(i)=\{ [g(t)] \mid g(t)\in \mathbb{Z}[t^{\pm 1}],~ g(1)\equiv i\mod a \}$ immediately. 
There exist $\widetilde{g}(t)\in\mathbb{Z}[t^{\pm 1}]$ and $j\in \mathbb{Z}$ such that $g(t)=g(1)+(1-t)\widetilde{g}(t)=i+(1-t)\widetilde{g}(t)+aj$
if and only if $g(1)\equiv i\mod a$. 
Hence we have 
$\orb(i)=\{ [i+(1-t)g(t)+aj] \mid g(t)\in\mathbb{Z}[t^{\pm 1}],~ j\in \mathbb{Z} \}$. 

\item
Let $J=(f_1(t),f_2(t),\dots ,f_k(t))$.
We define the $\mathbb{Z}[t^{\pm 1}]$-homomorphism 
$\phi : \mathbb{Z}[t^{\pm 1}]\to (1-t)M$ 
by $\phi (x)=(1-t)x+J$.
It is clear that $J\subset \ker (\phi)$.
For any 
$\sum _{i=1}^{k}a_i\widetilde{f_i}(t)\in I$, 
we have 
\begin{eqnarray*}
\phi (\sum _{i=1}^{k}a_i\widetilde{f_i}(t))
&=&(1-t)\sum _{i=1}^{k}a_i\widetilde{f_i}(t)+J\\
&=&\sum _{i=1}^{k}a_i(1-t)\widetilde{f_i}(t)+\sum _{i=1}^{k}a_if_i(1)+J\\
&=&\sum _{i=1}^{k}a_i((1-t)\widetilde{f_i}(t)+f_i(1))+J\\
&=&\sum _{i=1}^{k}a_if_i(t)+J,
\end{eqnarray*}
which implies that $I\subset \ker (\phi)$.
Hence we obtain $J+I\subset \ker (\phi)$.
On the other hand, let $g(t)\in \ker (\phi)$.
Since $\phi(g(t))=(1-t)g(t)+J=J$,
there exist $h_1(t),h_2(t),\dots ,h_k(t)\in \mathbb{Z}[t^{\pm 1}]$
such that $(1-t)g(t)=\sum _{i=1}^k h_i(t)f_i(t)$.
When we put $t=1$, 
we have $0=\sum _{i=1}^k h_i(1)f_i(1)$.
For any $i=1,2,\dots ,k$, 
we define $\widetilde{h_i}(t)$ by $h_i(t)=(1-t)\widetilde{h_i}(t)+h_i(1)$.
Since $\sum_{i=1}^k h_i(1)f_i(1)=0$,
we have 
\begin{eqnarray*}
(1-t)g(t)
&=&\sum_{i=1}^k h_i(t)f_i(t)\\
&=&\sum_{i=1}^k ((1-t)\widetilde{h_i}(t)f_i(t)+(1-t)h_i(1)\widetilde{f_i}(t)+h_i(1)f_i(1))\\
&=&(1-t)\sum_{i=1}^k \widetilde{h_i}(t)f_i(t)+(1-t)\sum_{i=1}^kh_i(1)\widetilde{f_i}(t)
+\sum_{i=1}^k h_i(1)f_i(1)\\
&=&(1-t)\sum_{i=1}^k \widetilde{h_i}(t)f_i(t)+(1-t)\sum_{i=1}^kh_i(1)\widetilde{f_i}(t).
\end{eqnarray*}
Hence we have 
$g(t)=\sum_{i=1}^k \widetilde{h_i}(t)f_i(t)+ \sum_{i=1}^kh_i(1)\widetilde{f_i}(t)$.
Since $\sum_{i=1}^k h_i(1)f_i(1)=0$, 
we have 
$\sum_{i=1}^kh_i(1)\widetilde{f_i}(t)\in I$,
that is, $g(t)\in J+I$.
Hence we obtain $J+I\supset \ker (\phi)$.
Obviously, $\phi$ is a surjection.
By the homomorphism theorem, 
$\widetilde{\phi} : \mathbb{Z}[t^{\pm 1}]/(J+I) \to (1-t)M$ 
is a $\mathbb{Z}[t^{\pm 1}]$-isomorphism,
which is 
an isomorphism as quandles.
By Lemma \ref{conn. comp. iso.}, 
it follows that 
$\orb(j) \cong  \mathbb{Z}[t^{\pm 1}]/((f_1(t),f_2(t),\dots ,f_k(t))+I)$
for any $j=0,1,\dots ,a-1$.
\end{enumerate}
\end{proof}

By Theorem \ref{Alex. ccd}, 
we obtain the following corollaries, 
where we note that 
the dihedral quandle $R_m$ is isomorphic to 
the Alexander quandle $\mathbb{Z}[t^{\pm 1}]/(m,t+1)$ 
for any $m \in \mathbb{Z}_{\geq 0}$.

\begin{corollary}\label{conn. dihedral}
For any $m \in \mathbb{Z}_{>0}$, 
$R_m$ is a connected dihedral quandle 
if and only if 
$m$ is an odd number.
Furthermore, 
when $m$ is an even number, 
$R_m=\orb(0) \sqcup \orb(1)=\{0,2,\ldots ,m-2 \} \sqcup \{1,3,\ldots ,m-1\}$ is 
the connected component decomposition of $R_m$.
\end{corollary}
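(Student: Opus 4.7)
The plan is to apply Theorem \ref{Alex. ccd} directly to the presentation $R_m \cong \mathbb{Z}[t^{\pm 1}]/(m,t+1)$ noted just before the corollary. Setting $f_1(t)=m$ and $f_2(t)=t+1$, the relevant invariant is
\[
a=\gcd(f_1(1),f_2(1))=\gcd(m,2),
\]
which equals $1$ precisely when $m$ is odd and equals $2$ precisely when $m$ is even. This is the entire source of the dichotomy in the statement.

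First I would handle the odd case: when $m$ is odd, $a=1$, so part (1) of Theorem \ref{Alex. ccd} yields $R_m=\orb(0)$, i.e., $R_m$ is connected. For the converse and the even case, when $m$ is even, $a=2$, and Theorem \ref{Alex. ccd} gives the decomposition $R_m=\orb(0)\sqcup\orb(1)$, so in particular $R_m$ is not connected. This establishes the ``if and only if'' assertion.

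Next, to identify the orbits set-theoretically, I would use the explicit description $\orb(i)=\{[g(t)]\mid g(t)\in\mathbb{Z}[t^{\pm 1}],~g(1)\equiv i\bmod 2\}$ from part (1) of Theorem \ref{Alex. ccd}. Under the isomorphism $\mathbb{Z}[t^{\pm 1}]/(m,t+1)\to R_m$ sending $[g(t)]$ to $g(-1)\bmod m$ (equivalently, an integer $k\in\{0,1,\dots,m-1\}$ corresponds to the class $[k]$, for which $g(1)=k$), the condition $g(1)\equiv 0\bmod 2$ picks out the even residues $\{0,2,\ldots,m-2\}$ and $g(1)\equiv 1\bmod 2$ picks out the odd residues $\{1,3,\ldots,m-1\}$.

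There is no real obstacle here; the content is all in Theorem \ref{Alex. ccd}. The only mild subtlety is making sure that the condition ``$g(1)\equiv i \bmod a$'' is well-defined on classes modulo $(m,t+1)$, but this is immediate since $m(1)=m\equiv 0\bmod 2$ when $m$ is even and $(t+1)|_{t=1}=2\equiv 0\bmod 2$, so the parity of $g(1)$ is independent of the representative.
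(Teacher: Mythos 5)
Your proposal is correct and follows exactly the route the paper intends: the corollary is stated as an immediate consequence of Theorem \ref{Alex. ccd} applied to $\mathbb{Z}[t^{\pm 1}]/(m,t+1)$ with $a=\gcd(m,2)$, and the paper gives no further argument. Your additional check that the parity of $g(1)$ is well-defined modulo $(m,t+1)$ and matches the parity of the residue in $R_m$ is a sound (and welcome) filling-in of the detail the paper leaves implicit.
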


\begin{corollary}\label{conn. Alex. qnd.}
Let $f_1(t),f_2(t), \dots ,f_k(t) \in \mathbb{Z}[t^{\pm 1}]$.
Then the Alexander quandle 
$\mathbb{Z}[t^{\pm 1}]/(f_1(t),f_2(t),\dots ,f_k(t))$ 
is connected 
if and only if 
$\gcd (f_1(1),f_2(1), \dots ,f_k(1))=1$. 
\end{corollary}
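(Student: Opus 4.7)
The plan is to derive this as an immediate consequence of Theorem \ref{Alex. ccd}(1). Write $a = \gcd(f_1(1), \ldots, f_k(1))$, so that by the theorem the connected component decomposition is $M = \bigsqcup_{i=0}^{a-1} \orb(i)$. Since $M$ is connected precisely when its connected component decomposition consists of a single orbit, the corollary amounts to showing this happens if and only if $a = 1$.

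First, in the direction $(\Leftarrow)$, assume $a = 1$. Then the decomposition reduces to a single orbit $\orb(0)$, so $M = \orb(0)$ and $M$ is connected. For the direction $(\Rightarrow)$, I would argue by contrapositive. Suppose $a \geq 2$; then $\orb(0)$ and $\orb(1)$ are listed as distinct pieces of the disjoint union, and one can verify they are genuinely distinct via Lemma \ref{orb. gcd} since $0 \not\equiv 1 \pmod{a}$, so $M$ fails to be connected.

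The only subtlety is the degenerate case $a = 0$, which corresponds to all $f_i(1)$ being zero and is not quite literally covered by the notation $\bigsqcup_{i=0}^{a-1}$. I would handle this by returning directly to Lemma \ref{orb. gcd}: when $a = 0$, that lemma specializes to the statement $\orb([\alpha(t)]) = \orb([\beta(t)])$ iff $\alpha(1) = \beta(1)$, so the classes $[0]$ and $[1]$ again lie in distinct orbits, and $M$ is not connected. There is no real obstacle; the whole proof is essentially a one-line unpacking of Theorem \ref{Alex. ccd}(1), with the only care needed being to note why distinct indices $i \in \{0, \ldots, a-1\}$ really do give distinct orbits (which is exactly Lemma \ref{orb. gcd}).
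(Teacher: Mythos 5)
Your proof is correct and follows exactly the route the paper intends: the corollary is stated as an immediate consequence of Theorem \ref{Alex. ccd}(1) (equivalently Lemma \ref{orb. gcd}), and your unpacking—including the observation that distinct residues give distinct orbits and the separate treatment of the degenerate case $a=0$—is just a careful spelling-out of that. No gaps.
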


For example, 
the tetrahedral quandle ${\mathbb Z}[t^{\pm 1}]/(2,t^2+t+1)$ is connected 
by Corollary \ref{conn. Alex. qnd.}.

\begin{corollary}\label{conn. comp. iso. of Alex. qnd. (m,t+a)}
Let $a,m \in \mathbb{Z}$ and let $n=m/\gcd (m, 1+a)$.
Then any connected component of the Alexander quandle $\mathbb{Z}[t^{\pm 1}]/(m,t+a)$
 is isomorphic to $\mathbb{Z}[t^{\pm 1}]/(n,t+a)$.
\end{corollary}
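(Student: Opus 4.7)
The plan is to apply Theorem \ref{Alex. ccd}(2) directly, taking $k=2$, $f_1(t)=m$ and $f_2(t)=t+a$. Then $f_1(1)=m$ and $f_2(1)=1+a$, so the integer denoted by $a$ in the theorem becomes $d:=\gcd(m,1+a)$ in the present notation, and $n=m/d$ is exactly the integer appearing in the statement. Reading off $\widetilde{f_i}$ from $f_i(t)=(1-t)\widetilde{f_i}(t)+f_i(1)$ immediately gives $\widetilde{f_1}(t)=0$ and $\widetilde{f_2}(t)=-1$.

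The crux is to identify the ideal
\[ I=\{a_1\cdot 0+a_2\cdot(-1) \mid a_1 m+a_2(1+a)=0,\ a_1,a_2\in\mathbb{Z}[t^{\pm1}]\}=\{-a_2 \mid a_1 m=-a_2(1+a)\}. \]
Writing $m=nd$ and $1+a=de$, so that $\gcd(n,e)=1$ in $\mathbb{Z}$, the relation reduces (after cancelling $d$) to $a_1 n=-a_2 e$, whence $n\mid a_2 e$ in $\mathbb{Z}[t^{\pm1}]$. Since $\mathbb{Z}[t^{\pm1}]$ is a UFD and every rational prime dividing $n$ is coprime to $e$ in $\mathbb{Z}$, Gauss-type reasoning yields $n\mid a_2$. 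Conversely, any $a_2\in n\mathbb{Z}[t^{\pm1}]$ arises by choosing $a_1=-a_2 e/n$. Therefore $I=n\mathbb{Z}[t^{\pm1}]$.

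Because $n\mid m$, we obtain $(f_1(t),f_2(t))+I=(m,t+a)+(n)=(n,t+a)$, and Theorem \ref{Alex. ccd}(2) then delivers $\orb(j)\cong\mathbb{Z}[t^{\pm1}]/(n,t+a)$ for every $j=0,1,\ldots,d-1$, which is the asserted isomorphism. The step I expect to be the main obstacle is the Gauss-type divisibility: one must justify that the elementary coprimality $\gcd(n,e)=1$ in $\mathbb{Z}$ transfers to divisibility in the Laurent polynomial ring, for which the cleanest route is invoking the UFD structure of $\mathbb{Z}[t]$ (via Gauss's lemma on contents) and localizing at $t$. Everything else is bookkeeping around the general formula of Theorem \ref{Alex. ccd}.
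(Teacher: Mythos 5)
Your proposal is correct and follows essentially the same route as the paper: both apply Theorem \ref{Alex. ccd}(2) with $f_1=m$, $f_2=t+a$, identify $I=\{-a_2\mid a_1m+a_2(1+a)=0\}$, show $I=n\mathbb{Z}[t^{\pm1}]$ by cancelling $\gcd(m,1+a)$ and using coprimality of $n$ and $(1+a)/\gcd(m,1+a)$, and conclude $(m,t+a)+(n)=(n,t+a)$. The only difference is cosmetic: the paper simply asserts the divisibility $n\mid x_2$ from coprimality, while you spell out a (slightly heavier than necessary) UFD justification; an elementary coefficientwise argument suffices since divisibility by an integer in $\mathbb{Z}[t^{\pm1}]$ is checked coefficient by coefficient.
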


\begin{proof}
By Theorem \ref{Alex. ccd},
any connected component of $\mathbb{Z}[t^{\pm 1}]/(m,t+a)$
is isomorphic to $\mathbb{Z}[t^{\pm 1}]/((m,t+a)+I)$, 
where $I=\{ -a_2 \, |\, a_1,a_2 \in \mathbb{Z}[t^{\pm 1}],~ a_1 m+a_2(1+a)=0 \}$.
For any $-x_2\in I$,
there exists $x_1\in \mathbb{Z}[t^{\pm 1}]$
such that $x_1 m+x_2(1+a)=0$.
Then we have $x_1 n+x_2(1+a)/\gcd(m,1+a)=0$.
Since $n$ and $(1+a)/\gcd(m,1+a)$ are relatively prime, $x_2$ is devisible by $n$.
Hence we obtain $I\subset n\mathbb{Z}[t^{\pm 1}]$.
On the other hand, 
for any $x_2= ns \in n\mathbb{Z}[t^{\pm 1}]$, 
there exists $x_1=-s(1+a)/\gcd(m,1+a) \in \mathbb{Z}[t^{\pm 1}]$ 
such that $x_1 m+x_2(1+a)=0$.
Hence $-x_2 \in I$, 
that is, $I\supset n\mathbb{Z}[t^{\pm 1}]$.
Therefore $I= n\mathbb{Z}[t^{\pm 1}]$.
Since $m$ is divisible by $n$,
we have 
$\mathbb{Z}[t^{\pm 1}]/((m,t+a)+I)=
\mathbb{Z}[t^{\pm 1}]/(m,n,t+a)=\mathbb{Z}[t^{\pm 1}]/(n,t+a)$.
\end{proof}

\begin{corollary}\label{conn. comp. iso. of dihedral qnd.}
If $m$ is an even number, 
then any connected component of $R_m$ is isomorphic to $R_{m/2}$.
\end{corollary}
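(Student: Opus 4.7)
The plan is to reduce this immediately to the previous corollary by identifying the dihedral quandle $R_m$ with an Alexander quandle of the specific form covered there. The paper has already noted that $R_m \cong \mathbb{Z}[t^{\pm 1}]/(m, t+1)$, so this fits the template $\mathbb{Z}[t^{\pm 1}]/(m, t+a)$ with $a = 1$.

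First I would apply Corollary \ref{conn. comp. iso. of Alex. qnd. (m,t+a)} with $a = 1$. It tells us that any connected component of $\mathbb{Z}[t^{\pm 1}]/(m, t+1)$ is isomorphic to $\mathbb{Z}[t^{\pm 1}]/(n, t+1)$, where $n = m/\gcd(m, 1+1) = m/\gcd(m, 2)$.

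Next, I would use the hypothesis that $m$ is even to simplify: $\gcd(m, 2) = 2$, so $n = m/2$. Therefore every connected component of $R_m$ is isomorphic to $\mathbb{Z}[t^{\pm 1}]/(m/2, t+1) \cong R_{m/2}$.

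There is really no main obstacle here: the work was done in Corollary \ref{conn. comp. iso. of Alex. qnd. (m,t+a)}, and this is just the specialization $a = 1$ combined with the identification of $R_m$ as an Alexander quandle. The only thing to be mildly careful about is the edge case $m = 2$, where $R_2$ has two elements and each connected component is a singleton $R_1$, which is consistent with the formula.
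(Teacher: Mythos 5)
Your proof is correct and is exactly the argument the paper intends: the corollary is stated as an immediate specialization of Corollary \ref{conn. comp. iso. of Alex. qnd. (m,t+a)} with $a=1$, using the identification $R_m \cong \mathbb{Z}[t^{\pm 1}]/(m,t+1)$ and $\gcd(m,2)=2$ for even $m$. Nothing is missing.
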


\section{The maximal connected subquandle decomposition}

In this section, we consider a decomposition of a quandle into 
the disjoint union of maximal connected subquandles, 
and 
show that 
it is uniquely obtained by iterating a connected component decomposition 
when the quandle is finite.
We remark that 
$\{ (1,2,3), (1,3,2) \}$ is a connected component of $\conj (S_3)$, 
but not a connected subquandle of it, 
where $S_3$ is a symmetric group of degree 3.

Let $X$ be a quandle and let $A$ be a connected subquandle of $X$.
We say that $A$ is a \emph{maximal connected subquandle} of $X$ 
when any connected subquandle of $X$ including $A$ is only $A$.  
We say that 
$X=\bigsqcup_{i \in I}A_i$ is 
the \emph{maximal connected subquandle decomposition} of $X$ 
when each $A_i$ is a maximal connected subquandle of $X$.

Let $X$ be a quandle and let $A$ be a subset of $X$.
For any $a, b \in X$, 
we write $a \sim_A b$ 
when there exists $f \in \inn (A)$ such that $f(a)=b$.
It is an equivalence relation on $X$.
It is easy to see that 
$a \sim_A b$ if and only if 
there exist $a_1,a_2, \ldots ,a_n \in A$ and $k_1,k_2, \ldots ,k_n \in \mathbb{Z}$ 
such that $a*^{k_1}a_1*^{k_2} \cdots *^{k_n}a_n=b$.
Furthermore $X$ is a connected quandle 
if and only if $a \sim_X b$ for any $a,b \in X$.

%

\begin{lemma}\label{conn. gen. by union}
Let $X$ be a quandle 
and let $A_i$ be  connected subquandles of $X$ for any $i \in I$. 
If $\bigcap_{i \in I} A_i \neq \emptyset$, 
then $\langle \bigcup_{i \in I}A_i \rangle$ is a connected subquandle of $X$.
\end{lemma}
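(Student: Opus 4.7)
The plan is to fix a basepoint $c \in \bigcap_{i \in I} A_i$ and show that every element of $Y := \langle \bigcup_{i \in I} A_i \rangle$ is $\sim_Y$-related to $c$; connectedness of $Y$ will then follow from transitivity of $\sim_Y$.

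First I would handle the generators. Take any $x \in \bigcup_{i \in I} A_i$, say $x \in A_j$. Since $A_j$ is a connected subquandle containing both $x$ and $c$, there exist $a_1,\ldots,a_n \in A_j$ and $k_1,\ldots,k_n \in \mathbb{Z}$ with $x *^{k_1} a_1 *^{k_2} \cdots *^{k_n} a_n = c$. Because $A_j \subseteq Y$, the elements $a_\ell$ all lie in $Y$, so the same expression witnesses $x \sim_Y c$. The conceptual point is simply that a relation $\sim_{A_j}$ inside the smaller subquandle $A_j$ automatically upgrades to a relation $\sim_Y$ inside the larger $Y$, since $\inn(A_j) \subseteq \inn(Y)$.

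Next I would extend from generators to all of $Y$ by induction on the length of an expression $p = a *^{k_1} x_1 *^{k_2} \cdots *^{k_n} x_n$ representing an arbitrary element $p \in Y$, where $a, x_1, \ldots, x_n \in \bigcup_{i \in I} A_i$. The case $n = 0$ is handled above. For the inductive step, write $p = p' *^{k_n} x_n$ with $p' \sim_Y c$ by induction; since $x_n \in Y$, the automorphism $S_{x_n}^{k_n}$ lies in $\inn(Y)$, so $p = S_{x_n}^{k_n}(p') \sim_Y p'$, and transitivity of $\sim_Y$ gives $p \sim_Y c$. Since this holds for every $p \in Y$, a second application of transitivity yields $p \sim_Y q$ for all $p, q \in Y$, which is precisely the definition of $Y$ being connected.

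I do not anticipate any real obstacle: the argument rests only on the hypothesis $\bigcap_{i \in I} A_i \neq \emptyset$ (to supply a common basepoint) and on the inclusion $A_i \subseteq Y$ (which lets the inner-automorphism action of each $A_i$ be realized inside $\inn(Y)$). The mildly subtle point to state carefully is that the relation $\sim_Y$ is defined using $\inn(Y)$ rather than $\inn(A_i)$, so one must verify explicitly that generators and compositions of $S_{x_n}^{k_n}$ all remain in $\inn(Y)$; this is immediate from $\bigcup_{i \in I} A_i \subseteq Y$.
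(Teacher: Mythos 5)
Your proposal is correct and follows essentially the same route as the paper: both arguments use the explicit word description of $\langle \bigcup_i A_i \rangle$ to relate an arbitrary element to a generator, connect each generator to a common basepoint $c \in \bigcap_i A_i$ via connectedness of the individual $A_i$, and conclude by transitivity of the equivalence relation. The only difference is presentational — you spell out the reduction to generators as an induction on word length, whereas the paper states it in one line.
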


\begin{proof}
Let $A=\bigcup_{i \in I} A_i$, 
and suppose that $\bigcap_{i \in I} A_i \neq \emptyset$.
For any $x,y \in \langle A \rangle$, 
there exist $a,b \in A$ 
such that $x \sim_A a$ and $y \sim_A b$.
For any $c \in \bigcap_{i \in I} A_i$, 
we have $a \sim_A c$ and $b \sim_A c$.
Hence we have 
$a \sim_A b$, which implies that $x \sim_A y$.
Therefore 
$x \sim_{\langle A \rangle} y$, that is, 
$\langle A \rangle$ is a connected subquandle of $X$.
\end{proof}

\begin{lemma}\label{empty or coincidence}
Let $X$ be a quandle 
and let $A_1$ and $A_2$ be maximal connected subquandles of $X$.
If $A_1 \cap A_2 \neq \emptyset$, 
then $A_1=A_2$.
\end{lemma}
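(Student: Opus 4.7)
The plan is to reduce the statement immediately to Lemma \ref{conn. gen. by union}. Assume $A_1 \cap A_2 \neq \emptyset$. Then, taking $I = \{1,2\}$ in Lemma \ref{conn. gen. by union}, the subquandle $\langle A_1 \cup A_2 \rangle$ of $X$ is connected, since both $A_1$ and $A_2$ are connected subquandles with nonempty common intersection.

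Next I would observe that $\langle A_1 \cup A_2 \rangle$ is a connected subquandle of $X$ that contains $A_1$ (and also $A_2$). By the maximality of $A_1$ as a connected subquandle of $X$, any connected subquandle of $X$ containing $A_1$ must equal $A_1$; hence $\langle A_1 \cup A_2 \rangle = A_1$. Symmetrically, using the maximality of $A_2$, $\langle A_1 \cup A_2 \rangle = A_2$. Combining these yields $A_1 = A_2$.

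There is essentially no obstacle here: the entire content of the lemma is packaged in Lemma \ref{conn. gen. by union}, and the remainder is a one-line application of the definition of maximality. The only minor point to check carefully is that $A_1 \subset \langle A_1 \cup A_2 \rangle$ and $A_2 \subset \langle A_1 \cup A_2 \rangle$, which is immediate from the definition of the generated subquandle.
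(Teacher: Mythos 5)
Your proposal is correct and follows exactly the paper's own argument: apply Lemma \ref{conn. gen. by union} with $I=\{1,2\}$ to see that $\langle A_1 \cup A_2 \rangle$ is a connected subquandle containing both $A_1$ and $A_2$, then invoke maximality of each to conclude $A_1 = \langle A_1 \cup A_2 \rangle = A_2$. No gaps.
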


\begin{proof}
Suppose that $A_1 \cap A_2 \neq \emptyset$.
By Lemma \ref{conn. gen. by union}, 
$\langle A_1 \cup A_2 \rangle$ is a connected subquandle of $X$.
Since $A_1$ and $A_2$ are included in $\langle A_1 \cup A_2 \rangle$ 
and maximal connected subquandles of $X$, 
we obtain $\langle A_1 \cup A_2 \rangle =A_1=A_2$.
\end{proof}

\begin{theorem}\label{uniquely cccd}
Any quandle has the unique maximal connected subquandle decomposition.
\end{theorem}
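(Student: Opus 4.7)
The plan is to build, for each $x \in X$, a canonical maximal connected subquandle $M_x$ containing $x$, and then read off both existence and uniqueness of the decomposition from Lemma \ref{empty or coincidence}.

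First, I fix $x \in X$ and let $\mathcal{F}_x$ denote the collection of all connected subquandles of $X$ that contain $x$. Since the singleton $\{x\}$ is a (trivially connected) subquandle, $\mathcal{F}_x$ is nonempty, and by construction $x$ lies in every member. Define $M_x := \langle \bigcup_{Y \in \mathcal{F}_x} Y \rangle$. Applying Lemma \ref{conn. gen. by union} to the family $\mathcal{F}_x$, whose common intersection contains $x$, shows that $M_x$ is itself a connected subquandle of $X$. If $N$ is any connected subquandle with $M_x \subseteq N$, then $x \in N$ forces $N \in \mathcal{F}_x$, so $N \subseteq M_x$; hence $N = M_x$, and $M_x$ is a maximal connected subquandle of $X$ containing $x$.

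Next, every $x \in X$ belongs to $M_x$, so $X = \bigcup_{x \in X} M_x$. Lemma \ref{empty or coincidence} says that any two distinct maximal connected subquandles are disjoint, so after identifying repetitions the union becomes a genuine disjoint union $X = \bigsqcup M_x$, giving a maximal connected subquandle decomposition.

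Finally, for uniqueness suppose $X = \bigsqcup_{i \in I} A_i$ is any maximal connected subquandle decomposition. For each $x \in X$ there is a unique index $i(x)$ with $x \in A_{i(x)}$; since $A_{i(x)}$ is a maximal connected subquandle sharing the point $x$ with $M_x$, Lemma \ref{empty or coincidence} forces $A_{i(x)} = M_x$. Thus both decompositions consist of exactly the same pieces. The only genuinely delicate step is checking that the generated subquandle $M_x$ remains connected—that is precisely the content of Lemma \ref{conn. gen. by union}—so once that lemma is in hand no substantial obstacle remains.
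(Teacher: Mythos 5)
Your proof is correct and follows essentially the same route as the paper: you form the union of all connected subquandles through a point, pass to the generated subquandle, invoke Lemma \ref{conn. gen. by union} for connectedness and Lemma \ref{empty or coincidence} for disjointness and uniqueness. The only cosmetic difference is that you explicitly note $\mathcal{F}_x \neq \emptyset$ via the singleton subquandle, which the paper leaves implicit.
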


\begin{proof}
Let $X$ be a quandle.
For any $a \in X$, 
we define $[a]:=\bigcup_{a \in W} W$,  
where $W$ is a connected subquandle of $X$.
By Lemma \ref{conn. gen. by union}, 
$\langle [a] \rangle$ is a connected subquandle of $X$.
Suppose that $A$ is a connected subquandle of $X$ 
including $\langle [a] \rangle$.
Since $A$ contains $a$, 
we have $A \subset [a]$, that is, $A=[a]=\langle [a] \rangle$.
Hence $[a]$ is a maximal connected subquandle of $X$.
For any $a,b \in X$, 
we have $[a] \cap [b]= \emptyset$ 
or $[a]=[b]$ by Lemma \ref{empty or coincidence}. 
Therefore there exists a subset $Y$ of $X$ 
such that $X=\bigsqcup_{a \in Y}[a]$.
It is the maximal connected subquandle decomposition of $X$.
Next, we show the uniqueness.
Let $B$ be a maximal connected subquandle of $X=\bigsqcup_{a \in Y}[a]$.
Then there exists $a' \in Y$ 
such that $B \cap [a'] \neq \emptyset$.
By Lemma \ref{empty or coincidence}, 
we have $B=[a']$.
Therefore $X$ has the unique maximal connected subquandle decomposition.
\end{proof}

For a quandle $X$, 
it is easy to see that 
any connected subquandle of $X$ 
is included in some connected component of $X$.
Therefore 
if a connected component of $X$ is 
a connected subquandle of $X$, 
then it is a maximal connected subquandle of $X$.

%
%
%
%

Let $X$ be a quandle 
and let $\pqnd (X)$ be the set of all subquandles of $X$.
For any $\mathcal{A} \subset \pqnd(X)$, 
we define $D(\mathcal{A}):=\bigcup_{A \in \mathcal{A}} \{ \orb_{A}(a) \mid a \in A \}$.
It is easy to see that 
$\bigcup_{A \in \mathcal{A}}A=\bigcup_{A \in D(\mathcal{A})}A$. 
We put $D^0(\mathcal{A}):=\mathcal{A}$ and 
$D^{k+1}(\mathcal{A}) := D(D^k(\mathcal{A}))$ for any $k \in \mathbb{Z}_{\geq 0}$.

\begin{theorem}\label{cccd}
Let $X$ be a quandle.
If there exists $n \in \mathbb{Z}_{\geq 0}$ 
such that $D^n(\{X\})=D^{n+1}(\{X\})$,
then $X=\bigsqcup_{A \in D^n( \{ X \} )}A$ is 
the maximal connected subquandle decomposition of $X$.
In particular, 
if $X$ is a finite quandle, 
then there exists $n \in \mathbb{Z}_{\geq 0}$ 
such that $X=\bigsqcup_{A \in D^n(\{X\})}A$ is 
the maximal connected subquandle decomposition of $X$.
\end{theorem}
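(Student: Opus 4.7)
The plan is to prove by induction on $k \geq 0$ that $D^k(\{X\})$ has two properties: (i) it is a partition of $X$ into subquandles, and (ii) any connected subquandle of $X$ meeting some $A \in D^k(\{X\})$ is contained in $A$. The base case $k=0$ is immediate. For the inductive step, given $A \in D^k(\{X\})$, the set $\{\orb_A(a) \mid a \in A\}$ is the connected component decomposition of $A$ and so partitions $A$ into subquandles; combined with the inductive form of (i), this yields (i) for $D^{k+1}(\{X\})$. For (ii), let $A' = \orb_A(a) \in D^{k+1}(\{X\})$ with $A \in D^k(\{X\})$, and let $B$ be a connected subquandle of $X$ meeting $A'$. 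Then $B$ meets $A$, so $B \subseteq A$ by the inductive hypothesis; since any expression $x *^{k_1} b_1 *^{k_2} \cdots *^{k_m} b_m = y$ with $b_i \in B \subseteq A$ witnesses $x \sim_A y$, the connectedness of $B$ forces it into a single $A$-orbit, which must be $A'$.

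Now suppose $D^n(\{X\}) = D^{n+1}(\{X\})$ for some $n$. Fix $A \in D^n(\{X\})$. Each $\orb_A(a)$ for $a \in A$ lies in $D^{n+1}(\{X\}) = D^n(\{X\})$ and is contained in $A$; since distinct members of the partition $D^n(\{X\})$ are disjoint and $A$ is itself a member, each $\orb_A(a)$ must equal $A$, meaning $A$ is connected. Property (ii) applied at level $n$ then forces any connected subquandle of $X$ containing $A$ to be contained in $A$, so $A$ is a maximal connected subquandle. Since by (i) the elements of $D^n(\{X\})$ partition $X$, this is precisely the maximal connected subquandle decomposition of $X$.

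For the finite case, observe that $D^{k+1}(\{X\})$ refines $D^k(\{X\})$ because each new block is a subset of some old block. Hence $|D^k(\{X\})|$ is a non-decreasing sequence of positive integers bounded above by $|X|$, so it stabilizes at some index $n$; once the cardinalities agree, a refinement of a partition must be the identity, so $D^n(\{X\}) = D^{n+1}(\{X\})$, and the first part of the theorem applies.

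I expect the main obstacle to be the precise formulation of the inductive invariant (ii) — the delicate point is that a connected subquandle $B$ of $X$ is a priori connected only under $\inn(B)$, not under $\inn(A)$, so one must observe that any witness of $\sim_B$-equivalence is automatically a witness of $\sim_A$-equivalence whenever $B \subseteq A$. Once this invariant is in place, the connectedness and maximality of each $A \in D^n(\{X\})$ and the termination in the finite case all follow by short arguments.
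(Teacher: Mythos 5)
Your proof is correct and follows essentially the same route as the paper: both arguments rest on the facts that a block of $D^n(\{X\})$ whose orbits are again blocks of $D^n(\{X\})$ must be connected, and that a connected subquandle is trapped inside a single connected component at every level (your inductive invariant (ii) is just a packaged form of the paper's descending chain $Y\subset X_n\subset\cdots\subset X_0$). The finite-case stabilization argument via cardinalities of the refining partitions is identical to the paper's.
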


\begin{proof}
Suppose that there exists $n \in \mathbb{Z}_{\geq 0}$ 
such that $D^n(\{X\})=D^{n+1}(\{X\})$.
For any $A \in D^n(\{X\})$, 
$A$ has only one orbit $A$, 
that is, $A$ is a connected subquandle of $X$.
Let $Y$ be a connected subquandle of $X$ including $A$ 
and let $X_0=X$.
Then 
there exists an orbit of $X_0$ including $Y$. 
We denote by $X_1 \in D^1(\{X\})$ the orbit.
For any $i \in \mathbb{Z}_{\geq 0}$, 
we denote by $X_{i+1} \in D^{i+1}(\{X\})$ 
an orbit of $X_i$ including $Y$ inductively.
Then we have $A \subset Y \subset X_n$.
Since $A, X_n \in D^n(\{X\})$, 
we have $A=Y=X_n$.
Therefore $A$ is the maximal connected subquandle of $X$, 
and $X=\bigsqcup_{A \in D^n( \{ X \} )}A$ is 
the maximal connected subquandle decomposition of $X$.
Next, let $X$ be a finite quandle.
For any $i \in \mathbb{Z}_{\geq 0}$, 
we have $\# D^i(\{X\}) \leq \# D^{i+1}(\{X\}) \leq \#X$.
Since $\# X$ is finite, 
there exists $n \in \mathbb{Z}_{\geq 0}$ 
such that $\# D^n(\{X\}) = \# D^{n+1}(\{X\})$, 
which implies that $D^n(\{X\}) = D^{n+1}(\{X\})$.
Therefore $X=\bigsqcup_{A \in D^n( \{ X \} )}A$ is 
the maximal connected subquandle decomposition of $X$.
\end{proof}

By Lemma \ref{conn. comp. iso.} and Theorem \ref{cccd},
the following corollary holds immediately.

\begin{corollary}\label{Alex. cccd}
All maximal connected subquandles of 
a finite Alexander quandle 
are isomorphic.
\end{corollary}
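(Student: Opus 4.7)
The plan is to combine Theorem \ref{cccd} with Lemma \ref{conn. comp. iso.} via induction on the iteration depth. Since $M$ is a finite Alexander quandle, Theorem \ref{cccd} provides some $n \in \mathbb{Z}_{\geq 0}$ for which the maximal connected subquandle decomposition of $M$ is $M = \bigsqcup_{A \in D^n(\{M\})} A$, so it suffices to verify that all elements of $D^n(\{M\})$ are mutually isomorphic as quandles.

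To this end, I would prove by induction on $k \geq 0$ that there exists an Alexander quandle $N_k$ such that every $A \in D^k(\{M\})$ is isomorphic (as a quandle) to $N_k$. The base case $k=0$ is immediate with $N_0 = M$. For the inductive step, assume every $A \in D^k(\{M\})$ satisfies $A \cong N_k$. Any $A' \in D^{k+1}(\{M\})$ is, by the definition of $D$, a connected component (orbit) of some $A \in D^k(\{M\})$; transporting along the isomorphism $A \cong N_k$ realises $A'$ as isomorphic to a connected component of the Alexander quandle $N_k$. By Lemma \ref{conn. comp. iso.}, every such component is isomorphic to $(1-t)N_k$, which is itself an Alexander quandle as a $\mathbb{Z}[t^{\pm 1}]$-submodule of $N_k$ equipped with the Alexander operation $a*b = ta+(1-t)b$. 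Setting $N_{k+1} := (1-t)N_k$ then completes the induction.

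Specialising to $k = n$ yields the corollary: every maximal connected subquandle of $M$ is isomorphic to $N_n = (1-t)^n M$. The only subtle point to be careful about is that a connected component of an Alexander quandle, viewed abstractly as a subquandle, can again be regarded as an Alexander quandle in order to feed into the next iteration; this is precisely what the identification $\orb(0) = (1-t)M$ from the proof of Lemma \ref{conn. comp. iso.} supplies, so no genuinely new obstacle arises beyond invoking that lemma at each level.
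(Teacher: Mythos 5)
Your proposal is correct and follows exactly the route the paper intends: the paper derives this corollary ``immediately'' from Lemma \ref{conn. comp. iso.} and Theorem \ref{cccd}, and your induction on the iteration depth, with $N_{k+1}=(1-t)N_k$, is precisely the argument left implicit there. The one subtlety you flag --- that $(1-t)N_k$ is again an Alexander quandle so the lemma can be reapplied --- is handled correctly.
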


For a quandle $X$, 
we denote by $\depth(X)$ the minimal number of $n$ 
satisfying $X=\bigsqcup_{A \in D^n(\{X\})}A$ is 
the maximal connected subquandle decomposition of $X$.
It is called the \emph{subquandle depth} of $X$ in \cite{NW06}.
Obviously, $X$ is a connected quandle 
if and only if $\depth(X)=0$.

\section{Examples of the maximal connected subquandle decomposition}

In this section, 
we give examples 
of the maximal connected subquandle decompositions of some quandles.

Let $S_n$ be a symmetric group of degree $n$.
We consider connectivity of $\conj(S_n)$.
By Example \ref{conjugation}, 
a connected component of $\conj (S_n)$ coincides with one of conjugacy classes of $S_n$.
We denote by $C(a)$ the conjugacy class of $S_n$ containing $a$.
We note that 
two elements of $S_n$ are conjugate if and only if their cyclic types coincide.

\begin{example}
\begin{enumerate}
\renewcommand{\labelenumi}{(\arabic{enumi})}
\item 
We show that the maximal connected subquandle decomposition of $S_3$ is 
\begin{eqnarray*}
S_3=\{ (1\, 2\, 3)\}\sqcup \{ (1\, 3\, 2)\}\sqcup C((1\, 2))\sqcup C(e).
\end{eqnarray*}
 $S_3=C((1\, 2\, 3))\sqcup C((1\, 2)) \sqcup C(e)$
is the connected component decomposition of $S_3$.
Furthermore, 
$C((1\, 2))$ and $C(e)$ are connected quandles, 
and 
$C((1\,2\,3))=\{ (1\, 2\, 3)\}\sqcup \{ (1\, 3\, 2)\}$ 
is the connected component decomposition of $C((1\,2\,3))$.
Therefore 
\begin{eqnarray*}
S_3=\{ (1\, 2\, 3)\}\sqcup \{ (1\, 3\, 2)\}\sqcup C((1\, 2))\sqcup C(e)
\end{eqnarray*}
is the maximal connected subquandle decomposition of $S_3$,
and we have $\depth(S_3)=2$.
\item
We show that the maximal connected subquandle decomposition of $S_4$ is 
\begin{eqnarray*}
S_4&=& C((1\,2\,3\,4))\\
&& \sqcup \{ (1\, 2\, 3),(1\, 4\, 2),(1\, 3\, 4),(2\, 4\, 3)\}\sqcup \{ (1\, 3\, 2),(1\, 2\, 4),(1\, 4\, 3),(2\, 3\, 4)\} \\
&& \sqcup \{ (1\, 2)(3\, 4)\} \sqcup \{(1\, 3)(2\, 4)\} \sqcup \{(1\, 4)(2\, 3)\} 
\sqcup C((1\,2)) \sqcup C(e).
\end{eqnarray*}
$S_4=C((1\,2\,3\,4))\sqcup C((1\,2\,3))\sqcup C((1\,2)(3\,4))\sqcup C((1\,2)) \sqcup C(e)$ 
is the connected component decomposition of $S_4$.
Furthermore, $C((1\,2\,3\,4))$, $C((1\,2))$ and $C(e)$ are connected quandles,
and $C((1\,2\,3))=\{ (1\, 2\, 3),(1\, 4\, 2),(1\, 3\, 4),(2\, 4\, 3)\}\sqcup \{ (1\, 3\, 2),(1\, 2\, 4),(1\, 4\, 3),(2\, 3\, 4)\}$
and $C((1\,2)(3\,4))=\{ (1\, 2)(3\, 4)\} \sqcup \{(1\, 3)(2\, 4)\} \sqcup \{(1\, 4)(2\, 3)\}$ are
 the connected component decompositions of $C((1\,2\,3))$ and $C((1\,2)(3\,4))$ respectively. 
Since any connected component of $C((1\,2\,3))$ and $C((1\,2)(3\,4))$ is connected,
\begin{eqnarray*}
S_4&=& C((1\,2\,3\,4))\\
&& \sqcup \{ (1\, 2\, 3),(1\, 4\, 2),(1\, 3\, 4),(2\, 4\, 3)\}\sqcup \{ (1\, 3\, 2),(1\, 2\, 4),(1\, 4\, 3),(2\, 3\, 4)\} \\
&& \sqcup \{ (1\, 2)(3\, 4)\} \sqcup \{(1\, 3)(2\, 4)\} \sqcup \{(1\, 4)(2\, 3)\} 
\sqcup C((1\,2)) \sqcup C(e)
\end{eqnarray*}
is the maximal connected subquandle decomposition of $S_4$,
and we have $\depth (S_4)=2$.
\end{enumerate}
\end{example}

\begin{example}
We show that 
the maximal connected subquandle decomposition of 
the dihedral quandle $R_0$ is 
\begin{align*}
R_0=\bigsqcup_{i \in \mathbb{Z}} \{i\}.
\end{align*}
We note that 
$R_0$ is isomorphic to 
the Alexander quandle $\mathbb{Z}[t^{\pm1}]/(t+1)$.
By Theorem \ref{Alex. ccd}, 
the connected component decomposition of $R_0$ is 
\begin{align*}
R_0=\orb(0) \sqcup \orb(1)=
\{ i \mid i: \mathrm{even} \} \sqcup \{ i \mid i: \mathrm{odd} \}.
\end{align*}
Since 
each connected component is isomorphic to $R_0$ 
by Corollary \ref{conn. comp. iso. of dihedral qnd.}, 
we have 
$D^n(\{R_0\})=\{ \{ 2^nj+i \mid j \in \mathbb{Z} \} \mid i=0,1,\ldots, 2^n-1 \}$ 
for any $n \in \mathbb{Z}_{\geq 0}$ 
by iterating a connected component decomposition.
Hence for any $a,b \in R_0$, 
there exists $l \in \mathbb{Z}_{>0}$ such that 
$a$ and $b$ are in distinct elements of $D^l(\{R_0\})$. 
Since any connected subquandle is included in a connected component, 
any connected subquandle of $R_0$ is included in an element of $D^l(\{ R_0 \})$. 
Therefore 
$a$ and $b$ are in distinct maximal connected subquandles of $R_0$, 
which implies that
$R_0=\bigsqcup_{i \in \mathbb{Z}} \{i\}$
is the maximal connected subquandle decomposition of $R_0$, 
and $\depth (R_0)=\infty$.
\end{example}

\begin{example}
We consider the maximal connected subquandle decomposition 
of the Alexander quandle $\mathbb{Z}[t^{\pm 1}]/(6,t^2+t+1)$.
Since $\gcd(6,1^2+1+1)=3$, 
$\mathbb{Z}[t^{\pm 1}]/(6,t^2+t+1)=\orb(0)\sqcup\orb(1)\sqcup \orb(2)$ 
is the connected component decomposition of $\mathbb{Z}[t^{\pm 1}]/(6,t^2+t+1)$, 
and
\begin{eqnarray*}
\orb(0)
&=\{ [g(t)]\, |\, g(t) \in \mathbb{Z}[t^{\pm 1}],~ g(1)\equiv 0\mod 3\}\\
&=\left\{
\begin{array}{l}
0,3,3t,1+2t,1+5t,2+t,2+4t,\\
3+3t,4+2t,4+5t,5+t,5+4t
\end{array}
\right\} ,\\
\orb(1)
&=\{ [g(t)]\,|\, g(t) \in \mathbb{Z}[t^{\pm 1}],~ g(1)\equiv 1\mod 3\}\\
&=\left\{
\begin{array}{ll}
1,4,t,4t,1+3t,2+2t,2+5t,\\
3+t,3+4t,4+3t,5+2t,5+5t
\end{array}
\right\} ,\\
\orb(2)
&=\{ [g(t)]\, |\, g(t) \in \mathbb{Z}[t^{\pm 1}],~ g(1)\equiv 2\mod 3\}\\
&=\left\{
\begin{array}{ll}
2,5,2t,5t,1+t,1+4t,2+3t,\\
3+2t,3+5t,4+t,4+4t,5+3t
\end{array}
\right\}
\end{eqnarray*}
by Theorem \ref{Alex. ccd}

Next, 
by Theorem \ref{Alex. ccd},
for any $i=0,1,2$, 
$\orb(i)$ is isomorphic to $\mathbb{Z}[t^{\pm 1}]/((6,t^2+t+1)+I)$,
where 
\begin{align*}
I &= \{ -a_2(t+2)\, |\, a_1,a_2\in\mathbb{Z}[t^{\pm 1}],~6a_1+3a_2=0\}\\
&= \{ -a_2(t+2)\, |\, a_1,a_2\in\mathbb{Z}[t^{\pm 1}],~a_2=-2a_1\}\\
&=2(t+2)\mathbb{Z}[t^{\pm 1}],
\end{align*}
which implies that 
$\mathbb{Z}[t^{\pm 1}]/((6,t^2+t+1)+I)=\mathbb{Z}[t^{\pm 1}]/(6,2(t+2),t^2+t+1)$.
By Theorem \ref{Alex. ccd},
we obtain that 
\begin{eqnarray*}
&& \mathbb{Z}[t^{\pm 1}]/(6,2(t+2),t^2+t+1)
 = \{ 0,3,2+t,5+t \}
\sqcup \{ 1,4,t,3+t \}
\sqcup \{ 2,5,1+t,4+t \}
\end{eqnarray*}
is the connected component decomposition of $\mathbb{Z}[t^{\pm 1}]/(6,2(t+2),t^2+t+1)$.
By the proof of Theorem \ref{Alex. ccd}, 
the map $\widetilde{\phi} : \mathbb{Z}[t^{\pm 1}]/(6,2(t+2),t^2+t+1)\to (1-t)(\mathbb{Z}[t^{\pm 1}]/(6,t^2+t+1))$ 
defined by $\widetilde{\phi}(x)=(1-t)x$ 
is an isomorphism.
Hence, by the proof of Lemma \ref{conn. comp. iso.},
\begin{eqnarray*}
\orb(0) &=& (1-t)(\mathbb{Z}[t^{\pm 1}]/(6,t^2+t+1))\\
&=& \{ 0, 3, 3t, 3+3t\}
\sqcup \{ 1+5t, 1+2t, 4+2t, 4+5t \}
\sqcup \{ 2+4t, 2+t, 5+t, 5+4t \} 
\end{eqnarray*}
is the connected component decomposition of $\orb (0)$.
Furthermore, 
for any $i=1,2$, 
the map $\phi_i : \orb (0) \to \orb (i)$ 
defined by $\phi_i (x)=x+i$ 
is an isomorphism 
by the proof of Lemma \ref{conn. comp. iso.}.
Therefore 
\begin{eqnarray*}
\orb(1)= \{ 1, 4, 1+3t, 4+3t\}
\sqcup \{ 2+5t, 2+2t, 5+2t, 5+5t \}
\sqcup \{ 3+4t, 3+t, t, 4t \}
\end{eqnarray*}
and
\begin{eqnarray*}
\orb(2)= \{ 2, 5, 2+3t, 5+3t\}
\sqcup \{ 3+5t, 3+2t, 2t, 5t \}
\sqcup \{ 4+4t, 4+t, 1+t, 1+4t \} 
\end{eqnarray*}
are the connected component decompositions of 
$\orb (1)$ and $\orb (2)$ respectively.

Finally, 
by Theorem \ref{Alex. ccd}, 
any connected component of $\mathbb{Z}[t^{\pm 1}]/(6,2(t+2),t^2+t+1)$
is isomorphic to $\mathbb{Z}[t^{\pm 1}]/((6,2(t+2),t^2+t+1)+I')$,
where 
\begin{align*}
I' &=\{ -2a_2-a_3(t+2)\, |\, a_1,a_2,a_3\in\mathbb{Z}[t^{\pm 1}],~6a_1+6a_2+3a_3=0\}\\
&=\{ -2a_2-a_3(t+2)\, |\, a_1,a_2,a_3\in\mathbb{Z}[t^{\pm 1}],~a_3=-2(a_1+a_2)\}\\
&=\{ -2a_2+2(a_1+a_2)(t+2) \mid a_1,a_2 \in\mathbb{Z}[t^{\pm 1}]\}\\
&=2\mathbb{Z}[t^{\pm 1}], 
\end{align*}
which implies that 
$\mathbb{Z}[t^{\pm 1}]/((6,2(t+2),t^2+t+1)+I')=\mathbb{Z}[t^{\pm 1}]/(2,t^2+t+1)$.
By Corollary \ref{conn. Alex. qnd.}, 
$\mathbb{Z}[t^{\pm 1}]/(2,t^2+t+1)$ is a connected quandle.
Therefore 
\begin{eqnarray*}
&&\mathbb{Z}[t^{\pm 1}]/(6,t^2+t+1) \\
&&= \{ 0, 3, 3t, 3+3t\}
\sqcup \{ 1+5t, 1+2t, 4+2t, 4+5t \}
\sqcup \{ 2+4t, 2+t, 5+t, 5+4t \} \\
&& \quad \sqcup  \{ 1, 4, 1+3t, 4+3t\}
\sqcup \{ 2+5t, 2+2t, 5+2t, 5+5t \}
\sqcup \{ 3+4t, 3+t, t, 4t \} \\
&& \quad \sqcup  \{ 2, 5, 2+3t, 5+3t\}
\sqcup \{ 3+5t, 3+2t, 2t, 5t \}
\sqcup \{ 4+4t, 4+t, 1+t, 1+4t \}
\end{eqnarray*}
is the maximal connected subquandle decomposition of $\mathbb{Z}[t^{\pm 1}]/(6,t^2+t+1)$, 
and we obtain that $\depth (\mathbb{Z}[t^{\pm 1}]/(6,t^2+t+1))=2$.
\end{example}

\begin{proposition}\label{Alex. cccd}
Let $n_0 \in \mathbb{Z}_{>0}$, 
$a\in \mathbb{Z}$ 
and put 
$n_{i+1}:=n_i / \gcd(n_i,1+a)$ for any $i \in \mathbb{Z}_{\geq 0}$.
Let $l$ be the minimal number satisfying $n_l=n_{l+1}$.
Then
the Alexander quandle $\mathbb{Z}[t^{\pm 1}]/(n_0,t+a)$ is decomposed into 
$N$ maximal connected subquandles, 
where $N=\prod_{i=0}^{l-1} \gcd(n_i,1+a)$, and 
any maximal connected subquandle of $\mathbb{Z}[t^{\pm 1}]/(n_0,t+a)$ 
is isomorphic to $\mathbb{Z}[t^{\pm 1}]/(n_l,t+a)$.
\end{proposition}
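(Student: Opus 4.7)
The plan is to iterate the connected component decomposition, using Corollary \ref{conn. comp. iso. of Alex. qnd. (m,t+a)} to identify the components at each stage, and then invoke Theorem \ref{cccd} to conclude that the iteration stabilizes precisely at the maximal connected subquandle decomposition.

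Setting $M_i := \mathbb{Z}[t^{\pm 1}]/(n_i, t+a)$, I would prove by induction on $i$ the two combined claims: (a) every element of $D^i(\{M_0\})$ is isomorphic as a quandle to $M_i$, and (b) $\# D^i(\{M_0\}) = \prod_{j=0}^{i-1} \gcd(n_j, 1+a)$. The base case $i=0$ is immediate. For the inductive step, fix $A \in D^i(\{M_0\})$; by (a) we have $A \cong M_i$. Theorem \ref{Alex. ccd} applied to the presentation $\mathbb{Z}[t^{\pm 1}]/(n_i, t+a)$ says that $A$ decomposes into exactly $\gcd(n_i, 1+a)$ connected components, and Corollary \ref{conn. comp. iso. of Alex. qnd. (m,t+a)} identifies each such component with $M_{i+1}$. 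Summing over all $A \in D^i(\{M_0\})$ establishes (a) and (b) at level $i+1$.

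Next, I would observe that $(n_i)_{i \geq 0}$ is a non-increasing sequence in $\mathbb{Z}_{>0}$ (since $n_{i+1}$ divides $n_i$), hence eventually constant, so $l$ is well-defined; moreover $n_l = n_{l+1}$ forces $\gcd(n_l, 1+a) = 1$. By Corollary \ref{conn. Alex. qnd.}, this means $M_l$ is a connected quandle, so every piece $A \in D^l(\{M_0\})$ is itself connected; in particular $D^{l+1}(\{M_0\}) = D^l(\{M_0\})$.

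Finally, Theorem \ref{cccd} yields that $M_0 = \bigsqcup_{A \in D^l(\{M_0\})} A$ is the maximal connected subquandle decomposition of $M_0$; combining with (a) and (b) at level $l$ gives exactly $N = \prod_{i=0}^{l-1} \gcd(n_i, 1+a)$ pieces, each isomorphic to $M_l = \mathbb{Z}[t^{\pm 1}]/(n_l, t+a)$. There is no substantive obstacle, as all the required tools are already in hand; the only point that requires care is keeping the two inductive invariants (the isomorphism type of the pieces and their count) synchronized through the iteration, and verifying that the sequence $(n_i)$ indeed stabilizes at a value with $\gcd(n_l, 1+a)=1$ rather than merely becoming eventually periodic.
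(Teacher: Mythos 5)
Your proposal is correct and follows essentially the same route as the paper: iterate the connected component decomposition, using Theorem \ref{Alex. ccd} to count the components of each $M_i$ and Corollary \ref{conn. comp. iso. of Alex. qnd. (m,t+a)} to identify them with $M_{i+1}$, then invoke Corollary \ref{conn. Alex. qnd.} to see that the iteration stabilizes exactly when $\gcd(n_l,1+a)=1$ and Theorem \ref{cccd} to conclude. Your explicit induction on the two invariants and the remark that $(n_i)$ is non-increasing (hence $l$ is well-defined) only make explicit what the paper leaves implicit.
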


\begin{proof}
By Theorem \ref{Alex. ccd}, 
for any $i\in \mathbb{Z}_{i\geq 0}$, 
$\mathbb{Z}[t^{\pm 1}]/(n_i,t+a)$ is decomposed into 
$\gcd (n_i,1+a)$ maximal connected subquandles, 
and any maximal connected subquandle of $\mathbb{Z}[t^{\pm 1}]/(n_i,t+a)$ 
is isomorphic to $\mathbb{Z}[t^{\pm 1}]/(n_{i+1},t+a)$.
Hence for any $i\in\mathbb{Z}_{>0}$, 
any element of $D^i(\{ \mathbb{Z}[t^{\pm 1}]/(n_0,t+a)\})$ 
is isomorphic to $\mathbb{Z}[t^{\pm 1}]/(n_{i},t+a)$,
and we have 
$\#D^i(\{ \mathbb{Z}[t^{\pm 1}]/(n_0,t+a)\})=\prod_{j=0}^{i-1} \gcd(n_j,1+a)$.
By Corollary \ref{conn. Alex. qnd.},
$n_k=n_{k+1}$, that is, $\gcd (n_k, 1+a)=1$ 
if and only if 
$D^k(\{ \mathbb{Z}[t^{\pm 1}]/(n_0,t+a) \})=D^{k+1}(\{ \mathbb{Z}[t^{\pm 1}]/(n_0,t+a) \})$.
Hence $l$ is the minimal number satisfying $n_l=n_{l+1}$
if and only if $\depth(\mathbb{Z}[t^{\pm 1}]/(n_0,t+a))=l$.
By Theorem \ref{cccd},
$\mathbb{Z}[t^{\pm 1}]/(n_0,t+a)=\bigsqcup_{C \in D^l(\{ \mathbb{Z}[t^{\pm 1}]/(n_0,t+a)\})} C$ is 
the maximal connected subquandle decomposition of $\mathbb{Z}[t^{\pm 1}]/(n_0,t+a)$.
Since $N=\prod_{i=0}^{l-1} \gcd(n_i,1+a)=\#D^l(\{ \mathbb{Z}[t^{\pm 1}]/(n_0,t+a)\})$,
$\mathbb{Z}[t^{\pm 1}]/(n_0,t+a)$ is decomposed into 
$N$ maximal connected subquandles, and 
any maximal connected subquandle of $\mathbb{Z}[t^{\pm 1}]/(n_0,t+a)$ 
is isomorphic to $\mathbb{Z}[t^{\pm 1}]/(n_l,t+a)$.
\end{proof}

In Proposition \ref{Alex. cccd},  
if $1+a$ is a prime number, 
$\mathbb{Z}[t^{\pm 1}]/(n_0,t+a)$ is decomposed into 
$|1+a|^l$ maximal connected subquandles, and 
any maximal connected subquandle of $\mathbb{Z}[t^{\pm 1}]/(n_0,t+a)$ 
is isomorphic to $\mathbb{Z}[t^{\pm 1}]/(k,t+a)$, 
where $n_0=k(1+a)^l$
such that $k$ and $1+a$ are relatively prime integers.

By Proposition \ref{Alex. cccd}, 
the following corollary holds.

\begin{corollary}\label{dihedral cccd}
For any $m\in\mathbb{Z}_{>0}$, 
the dihedral quandle $R_m$ is decomposed into $2^l$ maximal connected subquandles, 
and any maximal connected subquandle of $R_m$ is 
isomorphic to $R_k$, and $\depth (R_m)=l$,
where $k$ is an odd number, and $l\in\mathbb{Z}_{>0}$ 
such that  $m=2^lk$.
\end{corollary}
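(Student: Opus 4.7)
The plan is to deduce the corollary directly from Proposition \ref{Alex. cccd} by specializing to $a=1$, since the dihedral quandle $R_m$ is isomorphic to the Alexander quandle $\mathbb{Z}[t^{\pm 1}]/(m,t+1)$ (as noted before Corollary \ref{conn. dihedral}). Thus I would set $n_0 := m$ and $a := 1$, and then analyze the recursion $n_{i+1} = n_i / \gcd(n_i, 1+a) = n_i / \gcd(n_i, 2)$ to identify it with the $2$-adic valuation of $m$.

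The key step is to observe that if we write $m = 2^l k$ with $k$ odd and $l \in \mathbb{Z}_{\geq 0}$, then by induction $n_i = 2^{l-i} k$ for $0 \leq i \leq l$, because $\gcd(2^{l-i} k, 2) = 2$ as long as $l - i \geq 1$. Once we reach $n_l = k$, which is odd, we get $\gcd(n_l, 2) = 1$, hence $n_{l+1} = n_l$. Thus $l$ is exactly the minimal index with $n_l = n_{l+1}$, matching the hypothesis of Proposition \ref{Alex. cccd}.

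From Proposition \ref{Alex. cccd}, it then follows immediately that $R_m$ is decomposed into $N = \prod_{i=0}^{l-1} \gcd(n_i, 2) = 2^l$ maximal connected subquandles, each isomorphic to $\mathbb{Z}[t^{\pm 1}]/(n_l, t+1) = \mathbb{Z}[t^{\pm 1}]/(k, t+1) \cong R_k$. Finally, the identification $\depth(R_m) = l$ is exactly what the proof of Proposition \ref{Alex. cccd} establishes (the minimal $l$ with $n_l = n_{l+1}$ equals the depth).

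There is essentially no obstacle here; the whole statement is a direct specialization of Proposition \ref{Alex. cccd}. The only minor bookkeeping is verifying the recursion $n_{i+1} = n_i/\gcd(n_i,2)$ yields precisely the $2$-adic tower $2^l k, 2^{l-1} k, \ldots, k$, which follows immediately from $\gcd(2^j k, 2) = 2$ for $j \geq 1$ and $\gcd(k, 2) = 1$ since $k$ is odd.
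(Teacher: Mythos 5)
Your proposal is correct and matches the paper exactly: the paper gives no separate proof, stating only that the corollary follows from Proposition \ref{Alex. cccd}, and your specialization $n_0=m$, $a=1$ with the verification that $n_i=2^{l-i}k$ and that $l$ is the first index with $n_l=n_{l+1}$ is precisely the intended argument. The only cosmetic remark is that the corollary's statement writes $l\in\mathbb{Z}_{>0}$ where $l\in\mathbb{Z}_{\geq 0}$ would also cover odd $m$, as your induction correctly allows.
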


\section{A multiple conjugation quandle and a $G$-family of quandles}

We recall the definition of a multiple conjugation quandle \cite{Ishii15}.

\begin{definition}
A \emph{multiple conjugation quandle} $X$ 
is a disjoint union of groups $G_{\lambda} (\lambda \in \Lambda)$ 
with a binary operation $* : X \times X \to X$ 
satisfying the following axioms.
\begin{itemize}
\item
For any $a,b \in G_\lambda$, 
$a*b = b^{-1}ab$.
\item
For any $x \in X$ and $a,b \in G_\lambda$, 
$x*e_\lambda = x$ and $x*(ab)=(x*a)*b$, 
where $e_\lambda$ is the identity of $G_\lambda$.
\item
For any $x,y,z \in X$, 
$(x*y)*z=(x*z)*(y*z)$.
\item
For any $x \in X$ and $a,b \in G_\lambda$, 
$(ab)*x=(a*x)(b*x)$, 
where $a*x, b*x \in G \mu$ for some $\mu \in \Lambda$.
\end{itemize}
\end{definition}

Let  $X=\bigsqcup_{\lambda \in \Lambda} G_\lambda$ 
be a multiple conjugation quandle.
In this paper, 
we write $a_1*a_2* \cdots *a_n$ for $(\cdots ((a_1*a_2*)*a_3)* \cdots *a_n)$ simply 
and 
denote by $G_a$ the group $G_\lambda$ containing $a \in X$.
we also denote by $e_\lambda$ the identity of $G_\lambda$.
Then the identity of $G_a$ is denoted by $e_a$ for any $a \in X$.
For any $a,b \in X$, 
we define a map $S_a : X \to X$ by $S_a(x)= x*a$ 
and 
a binary operation $*^{-1} : X \times X \to X$ by $a *^{-1}b=S_b^{-1}(a)$.

Let $X=\bigsqcup_{\lambda \in \Lambda}G_\lambda, Y=\bigsqcup_{\mu \in M}G_\mu$ and 
$(X,*_X), (Y,*_Y)$ be multiple conjugation quandles.
A \emph{homomorphism} $\phi : X \to Y$ is a map 
from $X$ to $Y$ satisfying $\phi(x*_Xy)=\phi(x)*_Y\phi(y)$ for any $x,y \in X$ 
and $\phi(ab)=\phi(a)\phi(b)$ for any $\lambda \in \Lambda$ and $a,b \in G_\lambda$.
We call a bijective homomorphism an \emph{isomorphism}.
$X$ and $Y$ are \emph{isomorphic},
denoted by $X \cong Y$, 
if there exists an isomorphism from $X$ to $Y$.
We call an isomorphism from $X$ to $X$ an \emph{automorphism} of $X$.
For any $a \in X$, 
the map $S_a$ is an automorphism of $X$.

Let $(X,*)$ be a multiple conjugation quandle.
A non-empty subset $Y$ of $X$ is called 
a \emph{sub-multiple conjugation quandle} of $X$ 
if $Y$ itself is a multiple conjugation quandle 
under $*$ and the group operations of $X$.

\begin{proposition}\label{subMCQ}
Let $X=\bigsqcup_{\lambda \in \Lambda}G_\lambda$ be a multiple conjugation quandle 
and let $Y$ be a non-empty subset of $X$.
Then the following are equivalent.
\begin{enumerate}
\renewcommand{\labelenumi}{(\arabic{enumi})}
\item
$Y$ is a sub-multiple conjugation quandle of $X$.
\item
For any $a,b \in Y$, $a*b \in Y$, 
and  $Y \cap G_\lambda$ is a subgroup of $G_\lambda$ or empty set 
for any $\lambda \in \Lambda$.
\item
For any $a,b \in Y$, $a*b \in Y$, 
and there exists a subset $\Lambda' \subset \Lambda$ such that 
$Y=\bigsqcup_{\lambda \in \Lambda'}H_\lambda$, 
where $H_\lambda$ is a subgroup of $G_\lambda$ for any $\lambda \in \Lambda'$.
\end{enumerate}
\end{proposition}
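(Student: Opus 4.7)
The plan is to prove the cycle $(1) \Rightarrow (2) \Leftrightarrow (3) \Rightarrow (1)$. The equivalence $(2) \Leftrightarrow (3)$ is essentially a bookkeeping matter: given (2), one sets $\Lambda' = \{\lambda \in \Lambda \mid Y \cap G_\lambda \neq \emptyset\}$ and $H_\lambda = Y \cap G_\lambda$, and (3) then follows from the fact that $X = \bigsqcup_{\lambda \in \Lambda} G_\lambda$ is a disjoint union. Conversely, given (3), the disjointness of the $G_\lambda$ forces $Y \cap G_\lambda = H_\lambda$ for $\lambda \in \Lambda'$ and $Y \cap G_\lambda = \emptyset$ otherwise, each of which is a subgroup or empty.

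For $(1) \Rightarrow (2)$: Suppose $Y = \bigsqcup_{\mu \in M} G'_\mu$ is a sub-multiple conjugation quandle, where each $G'_\mu$ is a group under the operations inherited from $X$. Closure of $*$ in $Y$ gives $a*b \in Y$ for all $a,b \in Y$ immediately. For the group part, the key observation is that multiplication in $X$ is only defined within a single $G_\lambda$, so every $G'_\mu$ must be contained in some $G_\lambda$, and as a subgroup it inherits the identity $e_\lambda$ of $G_\lambda$. Hence if two distinct components $G'_\mu$ and $G'_{\mu'}$ of $Y$ both sat inside the same $G_\lambda$, they would share the element $e_\lambda$, contradicting the disjointness of the decomposition $Y = \bigsqcup_{\mu \in M} G'_\mu$. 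Therefore each $G_\lambda$ contains at most one $G'_\mu$, and $Y \cap G_\lambda$ is either empty or coincides with this unique subgroup $G'_\mu$ of $G_\lambda$.

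For $(3) \Rightarrow (1)$: Given the decomposition $Y = \bigsqcup_{\lambda \in \Lambda'} H_\lambda$ into subgroups, together with closure $a*b \in Y$, we verify the four MCQ axioms directly on $Y$. The first three axioms hold in $X$ and so restrict to $Y$ without issue: for $a,b \in H_\lambda$ the formula $a*b = b^{-1}ab$ stays inside $H_\lambda$ since $H_\lambda$ is a subgroup; the identity of $H_\lambda$ is $e_\lambda$, giving $x*e_\lambda = x$ and $x*(ab) = (x*a)*b$; and self-distributivity is inherited pointwise. The only subtle step is the fourth axiom $(ab)*x = (a*x)(b*x)$ for $a,b \in H_\lambda$ and $x \in Y$, where we must check that $a*x$ and $b*x$ lie in a common group component of $Y$: closure of $*$ gives $a*x, b*x \in Y$, and the axiom in $X$ places both in the same $G_\mu$, so both lie in $Y \cap G_\mu = H_\mu$, making the product on the right well-defined inside $Y$.

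The main obstacle is the uniqueness argument inside $(1) \Rightarrow (2)$, namely ruling out the possibility that two distinct group components of $Y$ could sit inside the same $G_\lambda$. The rest of the argument is formal verification of axioms, but this step genuinely uses the fact that the group operations of $Y$ are \emph{inherited} from those of $X$, together with the elementary remark that a subgroup has the same identity as the ambient group.
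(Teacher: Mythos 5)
Your proof is correct and follows essentially the same route as the paper's: a cyclic chain of implications in which the substantive steps are (i) observing that each group component of a sub-multiple conjugation quandle must lie inside a single $G_\lambda$ as a subgroup, with at most one component per $G_\lambda$, and (ii) checking closure and the axioms for the converse direction. You merely order the cycle differently ($(1)\Rightarrow(2)\Leftrightarrow(3)\Rightarrow(1)$ versus the paper's $(1)\Rightarrow(3)\Rightarrow(2)\Rightarrow(1)$) and spell out the identity-element argument and the fourth axiom more explicitly than the paper, which leaves those as ``clearly satisfies.''
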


\begin{proof}
First suppose that it satisfies (1).
Then $a*b \in Y$ for any $a,b \in Y$ 
and $Y=\bigsqcup_{\mu \in \Lambda'}H_\mu$, 
where $H_\mu$ is a group for any $\mu \in \Lambda'$.
For any $\mu \in \Lambda'$, 
there uniquely exists $\lambda \in \Lambda$ 
such that $H_\mu$ is a subgroup of $G_\lambda$.
Then we again write $H_\lambda$ for $H_\mu$.
Since $H_\lambda \cap H_{\lambda'} = \emptyset$ $(\lambda \neq \lambda')$ 
for any $\lambda, \lambda' \in \Lambda'$, 
we have $Y \cap G_\lambda = H_\lambda$. 
Hence $H_\lambda$ is a subgroup of $G_\lambda$ for any $\lambda \in \Lambda'$, 
and we have $\Lambda' \subset \Lambda$.
Therefore it satisfies (3).
Second suppose that it satisfies (3).
Since $G_\lambda \cap G_{\lambda'}=\emptyset$ 
and  $H_\mu \cap H_{\mu'}=\emptyset$ 
when $\lambda \neq \lambda'$ and $\mu \neq \mu'$ 
for any $\lambda, \lambda' \in \Lambda$ and $\mu, \mu' \in \Lambda'$, 
we have $Y \cap G_\mu = H_\mu$ for any $\mu \in \Lambda'$ 
and $Y \cap G_\lambda = \emptyset$ for any $\lambda \in \Lambda - \Lambda'$.
Therefore it satisfies (2).
Finally suppose that it satisfies (2).
Let $\Lambda' := \{ \lambda \in \Lambda \mid Y \cap G_\lambda \neq \emptyset \}$.
Since $G_\lambda \cap G_{\lambda'}=\emptyset$ 
when $\lambda \neq \lambda'$ 
for any $\lambda, \lambda' \in \Lambda$, 
we have $Y=\bigsqcup_{\lambda \in \Lambda'}(Y \cap G_\lambda)$, 
where $Y \cap G_\lambda$ is a subgroup of $G_\lambda$ 
for any $\lambda \in \Lambda'$.
$Y$ clearly satisfies the axioms of a multiple conjugation quandle.
Therefore it satisfies (1).
\end{proof}

%

Let $X=\bigsqcup_{\lambda \in \Lambda}G_\lambda$ be a multiple conjugation quandle 
and let $A$ be a subset of $X$.
Then the minimal sub-multiple conjugation qundle of $X$ including $A$, 
denote by $\langle A \rangle_\mathsf{MCQ}$, 
is called the sub-multiple conjugation quandle generated by $A$.

We recall the definition of a $G$-family of quandles \cite{IIJO13}.

\begin{definition}
Let $G$ be a group with the identity element $e$.
A \emph{$G$-family of quandles} is a non-empty set $X$ 
with a family of binary operations $*^g:X \times X \to X (g \in G)$ 
satisfying the following axioms.
\begin{itemize}
\item
For any $x \in X$ and $g \in G$, 
$x *^g x=x$.
\item
For any $x,y \in X$ and $g,h \in G$, 
$x*^{gh}y=(x*^gy)*^hy$ and $x*^ey=x$.
\item
For any $x,y,z \in X$ and $g,h \in G$, 
$(x*^gy)*^hz=(x*^hz)*^{h^{-1}gh}(y*^hz)$.
\end{itemize}
\end{definition}

Then the following proposition holds.

\begin{proposition}[\cite{IIJO13}]
Let $(X,*)$ be a quandle and let $m$ be the type of $X$.
Then $(X,\{*^i\}_{i \in \mathbb{Z}_{km}})$ (resp. $(X,\{*^i\}_{i \in \mathbb{Z}})$) 
is a $\mathbb{Z}_{km}$ (resp. $\mathbb{Z}$)-family of quandles 
for any $k \in \mathbb{Z}_{>0}$.
\end{proposition}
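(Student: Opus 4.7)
The plan is to verify the three defining axioms of a $G$-family of quandles directly from the quandle axioms on $X$, taking $G=\mathbb{Z}_{km}$ or $G=\mathbb{Z}$ written additively (so the identity becomes $0$ and the group product $gh$ becomes $i+j$). Throughout I will use the identification $x*^iy=S_y^i(x)$, which is exactly the definition given in Section~2. A crucial simplification is that $G$ is abelian, so the conjugation $h^{-1}gh$ appearing in the third axiom of a $G$-family collapses to $g$.

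The first axiom $x*^ix=x$ is immediate by induction on $i$ from $S_x(x)=x$ (the idempotency axiom for the quandle). The second axiom reduces to elementary bookkeeping for functional powers of the bijection $S_y$: one has $x*^0y=S_y^0(x)=x$, and $x*^{i+j}y=S_y^{i+j}(x)=S_y^j(S_y^i(x))=(x*^iy)*^jy$.

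The third axiom is the main point. Unwinding definitions, it asks that
$S_z^j\circ S_y^i=S_{S_z^j(y)}^i\circ S_z^j$, or equivalently $S_z^j\circ S_y^i\circ S_z^{-j}=S_{S_z^j(y)}^i$. I will first establish the $i=j=1$ case and then bootstrap. Applying $S_z\circ S_y\circ S_z^{-1}$ to an arbitrary element $x\in X$ gives $(S_z^{-1}(x)*y)*z$, which by the self-distributive axiom $(a*b)*c=(a*c)*(b*c)$ equals $(S_z^{-1}(x)*z)*(y*z)=x*S_z(y)=S_{S_z(y)}(x)$. Hence $S_z\circ S_y\circ S_z^{-1}=S_{S_z(y)}$. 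Iterating this relation $j$ times gives $S_z^j\circ S_y\circ S_z^{-j}=S_{S_z^j(y)}$, and raising both sides to the $i$-th power yields the desired identity. This is the only nontrivial calculation in the proof, and it is where abelianness of $G$ is tacitly used to match the quandle-theoretic conjugation on the right-hand side.

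It remains to check well-definedness in the $\mathbb{Z}_{km}$ case, i.e.\ that $*^i$ depends only on $i\bmod km$. The hypothesis that $X$ has type $m$ says $a*^mb=a$ for every $a,b\in X$, so $S_b^m=\mathrm{id}$; consequently $S_b^{km}=\mathrm{id}$, and the operations descend to $\mathbb{Z}_{km}$. The $\mathbb{Z}$ case needs no such check. The main obstacle of the whole argument is the conjugation formula $S_z S_y S_z^{-1}=S_{S_z(y)}$; once this is in hand, all three $G$-family axioms follow by direct, essentially formal, manipulation.
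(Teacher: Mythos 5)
The paper itself gives no proof of this proposition; it is quoted from \cite{IIJO13}, and the standard argument there is exactly the direct axiom-check you carry out. Your verification is correct: the key conjugation identity $S_z\circ S_y\circ S_z^{-1}=S_{S_z(y)}$ is established properly from self-distributivity, the bookkeeping for axioms (1) and (2) is right, and the descent to $\mathbb{Z}_{km}$ via $S_b^{m}=\id$ is the right use of the type hypothesis. The only loose ends are trivial: when you ``iterate $j$ times'' you should note that negative $j$ (needed for the $\mathbb{Z}$-family and for negative representatives) follows from the inverted form $S_z^{-1}\circ S_y\circ S_z=S_{S_z^{-1}(y)}$ of the same identity, and likewise $x*^{i}x=x$ for negative $i$ uses $S_x^{-1}(x)=x$.
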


Let $(X,\{ *^g \}_{g \in G})$ be a $G$-family of quandles.
Then $\bigsqcup_{x \in X} \{ x \} \times G$ is a multiple conjugation quandle 
with 
\[
(x,g)*(y,h):=(x*^hy,h^{-1}gh),
\hspace{10mm}
(x,g)(x,h):=(x,gh)
\]
for any $x,y \in X$ and $g,h \in G$.
We call it the \emph{associated multiple conjugation quandle} 
of  $(X,\{ *^g \}_{g \in G})$.

\section{The maximal connected sub-multiple conjugation quandle decomposition}

In this section, we consider a decomposition of a multiple conjugation quandle into 
the disjoint union of maximal connected sub-multiple conjugation quandles, 
and 
show that 
it is uniquely obtained by iterating a connected component decomposition 
when the multiple conjugation quandle is the finite disjoint union of groups.

Let $X=\bigsqcup_{\lambda \in \Lambda}G_\lambda$ 
which is a multiple conjugation quandle.
All automorphisms of $X$ form a group 
under composition of morphisms: $f \cdot g :=g \circ f$.
This group is called the \emph{automorphism group} of $X$ 
and denoted by $\aut (X)$.
For a subset $A$ of $X$, 
we denote by  $\inn (A)$ 
a subgroup of $\aut (X)$ generated by $\{ S_a \mid a \in A \}$.
In particular, $\inn (X)$ is called the \emph{inner automorphism group} of $X$. 
For any $\lambda \in \Lambda$ and $g \in \inn (A)$, 
we define an action of $\inn (A)$ on $\Lambda$ 
by $e_{\lambda \cdot g} = g(e_\lambda)$.
We say that $X$ is a \emph{connected multiple conjugation quandle} 
when the action is transitive.
In this paper, 
we call an orbit of $\Lambda$ by the action 
an orbit of $\Lambda$ simply 
and $\Lambda=\bigsqcup_{i \in I} \Lambda_i$ 
the orbit decomposition of $\Lambda$ 
when $\Lambda_i$ is an orbit of $\Lambda$ for any $i \in I$.
We denote by $\orb_\Lambda(\lambda)$ or $\orb(\lambda)$ 
the orbit of $\Lambda$ containing $\lambda$.
In general, for any orbit $\Lambda'$ of $\Lambda$, 
$\bigsqcup_{\lambda \in \Lambda'}G_\lambda$ is 
a sub-multiple conjugation quandle of $X$ 
and called a \emph{connected component} of $X$.
However $\bigsqcup_{\lambda \in \Lambda'}G_\lambda$ is not 
a connected sub-multiple conjugation quandle of $X$ in general.

\begin{proposition}\label{conn. comp. of quandle and MCQ}
Let $X$ be a quandle, $Y$ be a subquandle of $X$, $m$ be the type of $X$ 
and let $\bigsqcup_{x \in X} \{ x \} \times \mathbb{Z}_m$ be 
the associated multiple conjugation quandle 
of a $\mathbb{Z}_m$-family of quandles $(X,\{ *^i \}_{i \in \mathbb{Z}_m})$. 
Then $Y$ is a connected component of $X$ 
if and only if 
$\bigsqcup_{x \in Y} \{ x \} \times \mathbb{Z}_m$ is 
a connected component of $\bigsqcup_{x \in X} \{ x \} \times \mathbb{Z}_m$.
\end{proposition}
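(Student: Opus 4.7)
The plan is to reduce the question to an elementary comparison of two orbit structures, both of which live on the set $X$. Write $\tilde{X}:=\bigsqcup_{x\in X}\{x\}\times\mathbb{Z}_m$. Since $\mathbb{Z}_m$ is abelian, the MCQ operation simplifies to
\[
(x,g)*(y,h)=\bigl(x*^{h}y,\,h^{-1}gh\bigr)=(x*^{h}y,\,g),
\]
and therefore $S_{(y,h)}(e_x)=S_{(y,h)}(x,0)=(x*^{h}y,0)=e_{x*^{h}y}$, while $S_{(y,h)}^{-1}(e_x)=e_{x*^{-h}y}$. Thus the action of $\inn(\tilde{X})$ on the index set $\Lambda=X$ is given concretely by $x\cdot S_{(y,h)}^{\pm1}=x*^{\pm h}y$ for $y\in X$ and $h\in\mathbb{Z}_m$.

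The first main step is to identify the orbit $\orb_{\Lambda}(x)$ in $\tilde{X}$ with the quandle orbit $\orb_X(x)$. From the formula above,
\[
\orb_{\Lambda}(x)=\{x*^{h_1}a_1*^{h_2}a_2*\cdots*^{h_n}a_n\mid a_i\in X,\ h_i\in\mathbb{Z}_m\},
\]
whereas the quandle orbit is
\[
\orb_X(x)=\{x*^{k_1}a_1*^{k_2}a_2*\cdots*^{k_n}a_n\mid a_i\in X,\ k_i\in\mathbb{Z}\}.
\]
Because $m$ is the type of $X$, the identity $u*^{m}v=u$ holds for all $u,v\in X$, so $u*^{k}v=u*^{k\bmod m}v$ for every $k\in\mathbb{Z}$. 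Hence each factor on the right can be reduced to one on the left (and conversely every $h_i\in\mathbb{Z}_m$ lifts to some integer), giving $\orb_{\Lambda}(x)=\orb_X(x)$.

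The second step is to translate this equality of orbits into the statement about connected components. Recall from Section~6 that the connected components of the MCQ $\tilde{X}$ are exactly the sets $\bigsqcup_{\lambda\in\Lambda'}G_\lambda$, where $\Lambda'$ is an orbit of $\Lambda$ under $\inn(\tilde{X})$. Combining this with the equality $\orb_\Lambda(x)=\orb_X(x)$, a subset $Y\subset X$ is a connected component of the quandle $X$ if and only if $Y$ is an orbit of $\Lambda$, if and only if $\bigsqcup_{x\in Y}\{x\}\times\mathbb{Z}_m$ is a connected component of $\tilde{X}$.

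The only potential pitfall is keeping the two notions of ``inner automorphism'' straight: the action on the index set $\Lambda$ of $\tilde{X}$ is defined through the identity elements $e_x=(x,0)$, so one must check that the induced action coincides with the quandle action on $X$ up to restricting exponents from $\mathbb{Z}$ to $\mathbb{Z}_m$. The type axiom handles exactly this identification, and no further nontrivial work is required.
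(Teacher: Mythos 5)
Your proposal is correct and follows essentially the same route as the paper: both arguments translate the generators $S_{x}^{k}$ of $\inn(X)$ into the generators $S_{(x,k)}$ of $\inn\bigl(\bigsqcup_{x\in X}\{x\}\times\mathbb{Z}_m\bigr)$ and back, using the type-$m$ identity to pass between integer exponents and exponents in $\mathbb{Z}_m$. Your packaging of this correspondence as the equality $\orb_\Lambda(x)=\orb_X(x)$ of orbits of the index set is only a cosmetic difference from the paper's element-by-element chase.
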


\begin{proof}
Suppose that 
for any $a,b \in Y$, 
there exists 
$f \in \inn (X)$ 
such that $f(a)=b$.
Then 
for any $(a,0),(b,0) \in \bigsqcup_{x \in Y} \{ x \} \times \mathbb{Z}_m$, 
there exists 
$f=S_{x_n}^{k_n} \circ \cdots \circ S_{x_2}^{k_2} \circ S_{x_1}^{k_1} \in \inn (X)$ 
such that $f(a)=b$.
Hence there exists 
$g=S_{(x_n,k_n)} \circ \cdots \circ S_{(x_2,k_2)} \circ S_{(x_1,k_1)} 
\in \inn (\bigsqcup_{x \in X} \{ x \} \times \mathbb{Z}_m)$ 
such that $g((a,0))=(b,0)$.
On the other hand, 
suppose that 
for any $a,b \in Y$, 
there exists 
$g \in \inn (\bigsqcup_{x \in X} \{ x \} \times \mathbb{Z}_m)$ 
such that $g((a,0))=(b,0)$.
Then 
for any $a,b \in Y$, 
there exists 
$g=S_{(x_n,k_n)}^{i_n} \circ \cdots \circ S_{(x_2,k_2)}^{i_2} \circ S_{(x_1,k_1)}^{i_1} 
\in \inn (\bigsqcup_{x \in X} \{ x \} \times \mathbb{Z}_m)$ 
such that $g((a,0))=(b,0)$.
Hence there exists 
$f=S_{x_n}^{i_nk_n} \circ \cdots \circ S_{x_2}^{i_2k_2} \circ S_{x_1}^{i_1k_1} \in \inn (X)$ 
such that $f(a)=b$.
Therefore  $Y$ is a connected component of $X$ 
if and only if 
$\bigsqcup_{x \in Y} \{ x \} \times \mathbb{Z}_m$ is a connected component 
of $\bigsqcup_{x \in X} \{ x \} \times \mathbb{Z}_m$.
\end{proof}

By Proposition \ref{conn. comp. of quandle and MCQ}, 
we obtain the following corollary immediately.

\begin{corollary}\label{conn. quandle and conn. MCQ}
Let $X$ be a quandle, $m$ be the type of $X$ 
and let $\bigsqcup_{x \in X} \{ x \} \times \mathbb{Z}_m$ be 
the associated multiple conjugation quandle 
of a $\mathbb{Z}_m$-family of quandles $(X,\{ *^i \}_{i \in \mathbb{Z}_m})$. 
Then $X$ is a connected quandle 
if and only if 
$\bigsqcup_{x \in X} \{ x \} \times \mathbb{Z}_m$ is 
a connected multiple conjugation quandle.
\end{corollary}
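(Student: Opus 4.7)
The plan is to derive this corollary directly from the preceding Proposition \ref{conn. comp. of quandle and MCQ} by taking $Y=X$. The conceptual point is that, both for a quandle and for a multiple conjugation quandle, ``connected'' is by definition the same as ``having a single connected component'', so the statement should fall out of the proposition with $Y$ chosen as large as possible.

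First I would identify the index set of the associated multiple conjugation quandle $\bigsqcup_{x\in X}\{x\}\times\mathbb{Z}_m$ as $X$ itself, with the group indexed by $x$ being $G_x=\{x\}\times\mathbb{Z}_m$. Then I would apply Proposition \ref{conn. comp. of quandle and MCQ} with $Y=X$, obtaining the biconditional: $X$ is a connected component of the quandle $X$ if and only if the whole multiple conjugation quandle $\bigsqcup_{x\in X}\{x\}\times\mathbb{Z}_m$ is a connected component of itself.

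To finish, I would unfold each side of this biconditional using the definitions given in the paper. On the quandle side, $X$ being a connected component of itself is the same as $\inn(X)$ acting transitively on $X$, which is the definition of $X$ being a connected quandle. On the multiple conjugation quandle side, the MCQ being its own (unique) connected component is the same as $\inn$ acting transitively on the index set, which is exactly the definition of being a connected multiple conjugation quandle. Combining the two equivalences yields the corollary. There is no genuine obstacle here: the corollary is essentially a direct specialization of the proposition, and the only mild care needed is to match the index set of the associated MCQ with the underlying set of the quandle and to invoke the ``single orbit $=$ connected'' reformulation consistently on both sides.
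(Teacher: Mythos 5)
Your proposal is correct and matches the paper exactly: the paper states that the corollary follows immediately from Proposition \ref{conn. comp. of quandle and MCQ}, which is precisely your specialization to $Y=X$ together with unwinding the definitions of connectedness on both sides. No further comment is needed.
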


Let $X$ 
be a multiple conjugation quandle 
and let $A$ be a connected sub-multiple conjugation quandle of $X$.
We say that $A$ is a \emph{maximal connected sub-multiple conjugation quandle} of $X$ 
when any connected  sub-multiple conjugation quandle of $X$ 
including $A$ is only $A$.  
We say that 
$X=\bigsqcup_{i \in I}A_i$ is 
the \emph{maximal connected sub-multiple conjugation quandle decomposition} of $X$ 
when each $A_i$ is a maximal connected sub-multiple conjugation quandle of $X$.
By Corollary \ref{conn. quandle and conn. MCQ}, 
we obtain the following corollary.

\begin{corollary}\label{quandle ccc and MCQ ccc}
Let $X$ be a quandle, $m$ be the type of $X$ 
and let $\bigsqcup_{x \in X} \{ x \} \times \mathbb{Z}_m$ be 
the associated multiple conjugation quandle 
of a $\mathbb{Z}_m$-family of quandles $(X,\{ *^i \}_{i \in \mathbb{Z}_m})$.
Then the following hold.
\begin{enumerate}
\renewcommand{\labelenumi}{(\arabic{enumi})}
\item
$A$ is a maximal connected subquandle of $X$ 
if and only if 
$\bigsqcup_{x \in A} \{ x \} \times \mathbb{Z}_m$ is 
a maximal connected sub-multiple conjugation quandle of 
$\bigsqcup_{x \in X} \{ x \} \times \mathbb{Z}_m$.
\item
$X=\bigsqcup_{i \in I}A_i$ is 
the maximal connected subquandle decomposition of $X$
if and only if 
$\bigsqcup_{x \in X} \{ x \} \times \mathbb{Z}_m=
\bigsqcup_{i \in I}(\bigsqcup_{x \in A_i} \{ x \} \times \mathbb{Z}_m)$ is 
the maximal connected sub-multiple conjugation quandle decomposition 
of $\bigsqcup_{x \in X} \{ x \} \times \mathbb{Z}_m$.
\end{enumerate}
\end{corollary}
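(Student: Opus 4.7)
The plan is to prove part (1) directly and deduce part (2) as an immediate consequence. The underlying structural observation is that since $\mathbb{Z}_m$ is abelian, the formula
\[ S_{(z,k)}\bigl((y,h)\bigr) = (y *^k z,\; k^{-1}hk) = (y *^k z,\; h) \]
shows that every inner automorphism of $\widetilde{X} := \bigsqcup_{x \in X}\{x\} \times \mathbb{Z}_m$ preserves the second coordinate. Moreover, whenever $A$ is a subquandle of $X$, the subset $\widetilde{A} := \bigsqcup_{x \in A}\{x\} \times \mathbb{Z}_m$ is a sub-MCQ of $\widetilde{X}$, and by Corollary \ref{conn. quandle and conn. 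MCQ} applied with $A$ in place of $X$, $\widetilde{A}$ is connected if and only if $A$ is.

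For the forward direction of (1), I would assume $A$ is a maximal connected subquandle of $X$ and take any connected sub-MCQ $B$ with $\widetilde{A} \subseteq B$. By Proposition \ref{subMCQ}, $B = \bigsqcup_{y \in Y} H_y$ with $Y \supseteq A$, each $H_y$ a subgroup of $\mathbb{Z}_m$, and $H_a = \mathbb{Z}_m$ for all $a \in A$. The crucial step is to promote every fiber $H_y$ to $\mathbb{Z}_m$: given $y \in Y$ and $h \in \mathbb{Z}_m$, choose $a \in A$; by connectedness of $B$ there exists $\phi \in \inn(B)$ with $\phi((a, 0)) = (y, 0)$, and second-coordinate preservation gives $\phi((a, h)) = (y, h)$, which lies in $B$ since $(a, h) \in \widetilde{A}$. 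Hence $B = \widetilde{Y}$, and then $(y_1, 0)*(y_2, 1) = (y_1 * y_2, 0) \in B$ shows $Y$ is a subquandle; connectedness of $Y$ as a subquandle of $X$ is obtained by rewriting chains of inner automorphisms of $B$ as quandle operations in $X$. Maximality of $A$ then forces $Y = A$, hence $B = \widetilde{A}$.

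The reverse direction of (1) is straightforward: if $\widetilde{A}$ is a maximal connected sub-MCQ and $A'$ is a connected subquandle with $A \subseteq A'$, then $\widetilde{A'}$ is a connected sub-MCQ containing $\widetilde{A}$, so $\widetilde{A'} = \widetilde{A}$ and hence $A' = A$. Part (2) follows immediately from (1): the bijection $A_i \leftrightarrow \widetilde{A_i}$ translates a disjoint cover of $X$ into one of $\widetilde{X}$, and by (1) each piece is maximal connected on one side exactly when it is on the other. The main obstacle is the fiber-uniformity step in the forward direction of (1); everything else is formal once one combines second-coordinate preservation with the hypothesis that $\widetilde{A} \subseteq B$ provides full fibers over $A$ to propagate along the orbit of identities.
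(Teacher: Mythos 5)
Your proposal is correct and follows essentially the same route as the paper: both reduce part (1) to showing that any connected sub-multiple conjugation quandle containing $\bigsqcup_{x\in A}\{x\}\times\mathbb{Z}_m$ must be of the form $\bigsqcup_{x\in B}\{x\}\times\mathbb{Z}_m$ for a connected subquandle $B\supseteq A$, and then invoke maximality of $A$; part (2) is formal from (1) in both. The only difference is that the paper extracts this from Corollary \ref{conn. quandle and conn. MCQ} with the fiber-fullness of $Y$ left implicit, whereas you prove it explicitly via second-coordinate preservation --- a worthwhile detail, but not a different argument.
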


\begin{proof}
It is sufficient to prove (1) 
since (2) follows from (1) immediately.
Suppose that $A$ is a maximal connected subquandle of $X$.
Let $Y$ be a connected sub-multiple conjugation quandle 
including $\bigsqcup_{x \in A} \{ x \} \times \mathbb{Z}_m$.
Then there exists a connected subquandle $B$ of $X$ including $A$ 
such that $Y=\bigsqcup_{x \in B}\{x\} \times \mathbb{Z}_m$ 
by Corollary \ref{conn. quandle and conn. MCQ}.
Hence we have $A=B$ and $Y=\bigsqcup_{x \in A}\{x\} \times \mathbb{Z}_m$.
Therefore $\bigsqcup_{x \in A}\{x\} \times \mathbb{Z}_m$ is 
a maximal connected sub-multiple conjugation quandle 
of $\bigsqcup_{x \in X}\{x\} \times \mathbb{Z}_m$.
On the other hand, 
suppose that 
$\bigsqcup_{x \in A}\{x\} \times \mathbb{Z}_m$ is 
a maximal connected sub-multiple conjugation quandle 
of $\bigsqcup_{x \in X}\{x\} \times \mathbb{Z}_m$.
Let $B$ be a connected subquandle including $A$ 
and let $Y=\bigsqcup_{x \in B}\{x\} \times \mathbb{Z}_m$.
By Corollary \ref{conn. quandle and conn. MCQ}, 
$Y$ is a connected sub-multiple conjugation quandle 
of $\bigsqcup_{x \in X}\{x\} \times \mathbb{Z}_m$.
Hence we have $Y=\bigsqcup_{x \in A}\{x\} \times \mathbb{Z}_m$ and $A=B$.
Therefore $A$ is 
a maximal connected subquandle of $X$.
\end{proof}

\begin{example}
Let $m \in \mathbb{Z}_{>0}$.
Since the dihedral quandle $R_m$ is of type $2$, 
$\bigsqcup_{x \in R_m}\{x\} \times \mathbb{Z}_2$ is 
the associated multiple conjugation quandle 
of the $\mathbb{Z}_2$-family of quandles $(R_m,\{ *^i \}_{i \in \mathbb{Z}_2})$.
By Corollary \ref{dihedral cccd}, 
$R_m$ is decomposed into $2^l$ maximal connected subquandles, 
and any maximal connected subquandle of $R_m$ is 
isomorphic to $R_k$, 
where $k$ is an odd number, and $l\in\mathbb{Z}_{>0}$ 
such that  $m=2^lk$.
Therefore 
$\bigsqcup_{x \in R_m}\{x\} \times \mathbb{Z}_2$ 
is decomposed into $2^l$ maximal connected sub-multiple conjugation quandles, 
and any maximal connected sub-multiple conjugation quandles of 
$\bigsqcup_{x \in R_m}\{x\} \times \mathbb{Z}_2$ 
is isomorphic to $\bigsqcup_{x \in R_k}\{x\} \times \mathbb{Z}_2$
by Corollary \ref{quandle ccc and MCQ ccc}.
\end{example}

Let $X$ be a multiple conjugation quandle 
and let $A$ be a subset of $X$.
For any $a, b \in X$, 
we write $a \simmcq_A b$ 
when there exists $f \in \inn (A)$ such that $f(e_a)=e_b$.
It is an equivalence relation on $X$.
It is easy to see that 
$a \simmcq_A b$ 
if and only if 
there exist $a_1,a_2, \ldots ,a_n \in A$ 
and $k_1,k_2, \ldots ,k_n \in \mathbb{Z}$ 
such that $e_a*^{k_1}a_1*^{k_2} \cdots *^{k_n}a_n=e_b$.
Furthermore $X$ is a connected multiple conjugation quandle 
if and only if 
$a \simmcq_X b$ for any $a,b \in X$.

\begin{lemma}\label{generated MCQ}
Let $X$ be a multiple conjugation quandle 
and let $A$ be a subset of $X$.
Then for any $x \in \langle A \ranglemcq$, 
there exists $a \in A$ 
such that $x \simmcq_A a$.
\end{lemma}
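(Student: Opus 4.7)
The plan is to define $B := \{x \in X : x \simmcq_A a \text{ for some } a \in A\}$ and aim to show $\langle A \ranglemcq \subseteq B$. The natural strategy is to show $B$ is a sub-multiple conjugation quandle containing $A$ and then invoke Proposition \ref{subMCQ}. Condition (2) about $B \cap G_\lambda$ is easy because $\simmcq_A$ depends only on identity elements: whenever $B \cap G_\lambda \neq \emptyset$, we actually have $G_\lambda \subseteq B$, so $B \cap G_\lambda = G_\lambda$ is a subgroup of itself. The real difficulty is closure of $B$ under $*$: given $x, y \in B$ with $f(e_x) = e_a$ for some $f \in \inn(A)$ and $a \in A$, one would like $f \circ S_y^{-1} \in \inn(A)$ to send $e_{x*y} = S_y(e_x)$ back to $e_a$, but $S_y \in \inn(A)$ is not immediate when $y \notin A$.

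To circumvent this, I would first establish an auxiliary sub-lemma: for every $y \in \langle A \ranglemcq$, the map $S_y$ lies in $\inn(A)$. The proof is to check that $C := \{y \in X : S_y \in \inn(A)\}$ is itself a sub-multiple conjugation quandle containing $A$. Clearly $A \subseteq C$ by definition of $\inn(A)$. For each $\lambda$, $C \cap G_\lambda$ is a subgroup of $G_\lambda$: it contains $e_\lambda$ since $S_{e_\lambda} = \id$, and it is closed under products and inverses via the MCQ identities $S_{ab} = S_b \circ S_a$ and $S_{a^{-1}} = S_a^{-1}$, both consequences of $x * (ab) = (x * a) * b$ together with $x * e_\lambda = x$. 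Closure of $C$ under $*$ follows from the automorphism identity $S_{x * y} = S_y \circ S_x \circ S_y^{-1}$, which holds because every $S_y$ is an MCQ automorphism. Proposition \ref{subMCQ} then yields $\langle A \ranglemcq \subseteq C$.

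With the sub-lemma available, I would finish by showing that $D := B \cap C$ is a sub-MCQ containing $A$. Closure of $D$ under $*$ becomes direct: for $x, y \in D$, the element $f \circ S_y^{-1}$ now does lie in $\inn(A)$ because $y \in C$, and it sends $e_{x*y}$ to $e_a$, so $x * y \in B$; closure of $C$ already gives $x * y \in C$, hence $x * y \in D$. For any $\lambda$ with $D \cap G_\lambda \neq \emptyset$, the $B$-part forces $G_\lambda \subseteq B$, so $D \cap G_\lambda = C \cap G_\lambda$, which is a subgroup by the analysis in the previous step. Since $A \subseteq D$ is immediate, Proposition \ref{subMCQ} gives $\langle A \ranglemcq \subseteq D \subseteq B$, which is the desired conclusion.

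The main obstacle is precisely the hidden dependence on $S_y \in \inn(A)$ when closing $B$ under $*$; isolating this as the auxiliary sub-lemma and proving it through the same sub-MCQ machinery is the key move, after which the remaining verifications reduce to routine bookkeeping with the MCQ axioms and the conjugation identity $g \circ S_y \circ g^{-1} = S_{g(y)}$.
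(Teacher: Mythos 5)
Your argument is correct, but it follows a genuinely different route from the paper's. The paper works at the level of elements: it forms the quandle-generated subquandle $\langle A\rangle$ of $X$, sets $\Lambda'=\{\lambda\mid\langle A\rangle\cap G_\lambda\neq\emptyset\}$ and lets $H_\lambda$ be the subgroup of $G_\lambda$ generated by $\langle A\rangle\cap G_\lambda$, verifies via the axiom $(ab)*x=(a*x)(b*x)$ that $\bigsqcup_{\lambda\in\Lambda'}H_\lambda$ is a sub-multiple conjugation quandle containing $A$ (so it contains $\langle A\ranglemcq$), and then, for $x\in H_\mu$, picks an explicit word $a*^{k_1}a_1*^{k_2}\cdots*^{k_n}a_n\in\langle A\rangle\cap G_\mu$ and computes $e_x=(a*^{k_1}\cdots*^{k_n}a_n)(a*^{k_1}\cdots*^{k_n}a_n)^{-1}=e_a*^{k_1}a_1*^{k_2}\cdots*^{k_n}a_n$, which exhibits the required $a$. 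You instead work at the level of inner automorphisms: your auxiliary sub-lemma that $S_y\in\inn(A)$ for every $y\in\langle A\ranglemcq$ --- proved by checking that $C=\{y\mid S_y\in\inn(A)\}$ is a sub-MCQ containing $A$ via $S_{e_\lambda}=\id$, $S_{ab}=S_b\circ S_a$, $S_{a^{-1}}=S_a^{-1}$ and $S_{x*y}=S_y\circ S_x\circ S_y^{-1}$ --- is precisely the point the paper handles implicitly by manipulating explicit words, and once it is in place the closure of $B\cap C$ under $*$ and the subgroup condition for $(B\cap C)\cap G_\lambda$ are routine, as you say. Both proofs are complete. Yours is more structural, avoids word calculations entirely, and isolates a reusable fact (equivalently, $\inn(\langle A\ranglemcq)=\inn(A)$); the paper's is more computational but has the side benefit of locating $\langle A\ranglemcq$ concretely inside $\bigsqcup_{\lambda\in\Lambda'}H_\lambda$, which makes the final identity $e_x=e_a*^{k_1}a_1*^{k_2}\cdots*^{k_n}a_n$ completely explicit.
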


\begin{proof}
Let $X=\bigsqcup_{\lambda \in \Lambda}G_\lambda$, 
$\Lambda'=\{ \lambda \in \Lambda \mid \langle A \rangle \cap G_\lambda \neq \emptyset \}$, 
where $\langle A \rangle$ is a subquandle of $X$ generated by $A$, 
and let $H_\lambda$ be a subgroup of $G_\lambda$ 
generated by $ \langle A \rangle \cap G_\lambda$ 
for any $\lambda \in \Lambda'$.
For any $x,y \in \bigsqcup_{\lambda \in \Lambda'}H_\lambda$, 
there exist $\lambda_1, \lambda_2 \in \Lambda'$ 
such that $x \in H_{\lambda_1}, y \in H_{\lambda_2}$, and
there exist $w_1,w_2,\ldots ,w_n \in \langle A \rangle \cap G_{\lambda_1}$, 
$v_1,v_2,\ldots ,v_m \in \langle A \rangle \cap G_{\lambda_2}$ and 
$k_1,k_2,\ldots ,k_n,l_1,l_2,\ldots ,l_m \in \mathbb{Z}$
such that $x=w_1^{k_1}w_2^{k_2}\cdots w_n^{k_n}$ and 
$y=v_1^{l_1}v_2^{l_2}\cdots v_m^{l_m}$.
Then we have 
$x*y=(w_1^{k_1}w_2^{k_2}\cdots w_n^{k_n})*(v_1^{l_1}v_2^{l_2}\cdots v_m^{l_m})
=(w_1*^{l_1}v_1*^{l_2}\cdots*^{l_m}v_m)^{k_1}
(w_2*^{l_1}v_1*^{l_2}\cdots*^{l_m}v_m)^{k_2}
\cdots (w_n*^{l_1}v_1*^{l_2}\cdots*^{l_m}v_m)^{k_n}$.
Since $\langle A \rangle$ is a quandle, 
$w_i*^{l_1}v_1*^{l_2}\cdots*^{l_m}v_m \in \langle A \rangle$ 
for any $i=1,2,\ldots ,n$, 
and then there exists $\lambda_0 \in \Lambda$ 
such that $w_i*^{l_1}v_1*^{l_2}\cdots*^{l_m}v_m \in G_{\lambda_0}$ 
for any $i=1,2,\ldots ,n$.
Since $\langle A \rangle \cap G_{\lambda_0} \neq \emptyset$, 
we have $\lambda_0 \in \Lambda'$ 
and $x*y \in H_{\lambda_0} \subset \bigsqcup_{\lambda \in \Lambda'}H_\lambda$.
Hence $\bigsqcup_{\lambda \in \Lambda'}H_\lambda$ is a 
sub-multiple conjugation quandle of $X$ by Proposition \ref{subMCQ}.
Then we have $\langle A \ranglemcq \subset \bigsqcup_{\lambda \in \Lambda'}H_\lambda$ 
since $A \subset \bigsqcup_{\lambda \in \Lambda'}H_\lambda$.
Hence for any $x \in \langle A \ranglemcq$, 
there exists $\mu \in \Lambda'$ 
such that $x \in H_\mu$ and $\langle A \rangle \cap G_\mu \neq \emptyset$.
Consequently, there exist $a,a_1,a_2,\ldots ,a_n \in A$ and $k_1,k_2,\ldots ,k_n \in \mathbb{Z}$ 
such that $a*^{k_1}a_1*^{k_2}\cdots*^{k_n}a_n \in \langle A \rangle \cap G_\mu$.
Therefore we have 
$e_x=(a*^{k_1}a_1*^{k_2}\cdots*^{k_n}a_n)(a*^{k_1}a_1*^{k_2}\cdots*^{k_n}a_n)^{-1}
=(aa^{-1})*^{k_1}a_1*^{k_2}\cdots*^{k_n}a_n
=e_a*^{k_1}a_1*^{k_2}\cdots*^{k_n}a_n$, 
which implies that 
$x \simmcq_A a$.
\end{proof}

%

\begin{lemma}\label{MCQ conn. gen. by union}
Let $X$ be a multiple conjugation quandle 
and let $A_i$ be connected sub-multiple conjugation quandles of $X$ 
for any $i \in I$. 
If $\bigcap_{i \in I} A_i \neq \emptyset$, 
then $\langle \bigcup_{i \in I}A_i \ranglemcq$ is 
a connected sub-multiple conjugation quandle of $X$.
\end{lemma}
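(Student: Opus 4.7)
The plan is to mirror the proof of Lemma \ref{conn. gen. by union} (the quandle version), with the additional complication that in a multiple conjugation quandle the generated sub-object $\langle \bigcup_{i \in I} A_i \ranglemcq$ contains elements produced by both the quandle operation $*$ and the group multiplications of the components $G_\lambda$. The key bridge that makes the quandle-style argument go through is the already-proved Lemma \ref{generated MCQ}, which asserts that every element of $\langle A \ranglemcq$ is $\simmcq_A$-equivalent to some element of $A$ itself.

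First, I would set $A := \bigcup_{i \in I} A_i$ and fix an element $c \in \bigcap_{i \in I} A_i$, which is nonempty by hypothesis. The sub-multiple conjugation quandle property of $\langle A \ranglemcq$ is automatic from its definition, so the only thing to establish is connectedness, i.e.\ that $x \simmcq_{\langle A \ranglemcq} y$ for all $x, y \in \langle A \ranglemcq$.

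Second, given arbitrary $x, y \in \langle A \ranglemcq$, Lemma \ref{generated MCQ} yields elements $a, b \in A$ with $x \simmcq_A a$ and $y \simmcq_A b$. Now $a \in A_i$ for some $i \in I$, and since $A_i$ is a connected sub-multiple conjugation quandle containing both $a$ and $c$, we have $a \simmcq_{A_i} c$, which upgrades to $a \simmcq_A c$ because $A_i \subset A$ and so $\inn(A_i) \subset \inn(A)$ (identifying the natural restrictions). Symmetrically $b \simmcq_A c$, so by transitivity of the equivalence relation $\simmcq_A$ we obtain $a \simmcq_A b$, and stringing the three relations together gives $x \simmcq_A y$.

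Third, since $A \subset \langle A \ranglemcq$, any $f \in \inn(A)$ also lies in $\inn(\langle A \ranglemcq)$, so $\simmcq_A$ is contained in $\simmcq_{\langle A \ranglemcq}$; hence $x \simmcq_{\langle A \ranglemcq} y$, proving connectedness. The only non-routine step is the appeal to Lemma \ref{generated MCQ}: without it, one cannot a priori reduce an element built from mixed quandle/group operations to one produced only by the quandle action of $A$, and this is precisely where the MCQ proof goes beyond the quandle proof. Once that reduction is in hand, the ``route through $c$'' argument is completely parallel to the quandle case.
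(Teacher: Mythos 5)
Your proposal is correct and follows essentially the same route as the paper's own proof: both invoke Lemma \ref{generated MCQ} to reduce arbitrary elements of $\langle \bigcup_{i\in I}A_i\ranglemcq$ to elements of $\bigcup_{i\in I}A_i$, then route through a common point $c\in\bigcap_{i\in I}A_i$ and use transitivity of $\simmcq_A$. Your explicit justification of the upgrades $\simmcq_{A_i}\Rightarrow\simmcq_A\Rightarrow\simmcq_{\langle A\ranglemcq}$ is slightly more detailed than the paper's, but the argument is the same.
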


\begin{proof}
Let $A=\bigcup_{i \in I} A_i$, 
and suppose that $\bigcap_{i \in I} A_i \neq \emptyset$.
For any $x,y \in \langle A \ranglemcq$, 
there exist $a,b \in A$ 
such that $x \simmcq_A a$ and $y \simmcq_A b$ 
by Lemma \ref{generated MCQ}.
For any $c \in \bigcap_{i \in I} A_i$, 
we have $a \simmcq_A c$ and $b \simmcq_A c$.
Hence we have 
$a \simmcq_A b$, which implies that $x \simmcq_A y$.
Therefore 
$x \simmcq_{\langle A \ranglemcq} y$, that is, 
$\langle A \ranglemcq$ is a connected sub-multiple conjugation quandle of $X$.
\end{proof}

\begin{lemma}\label{MCQ empty or coincidence}
Let $X$ be a multiple conjugation quandle 
and let $A_1$ and $A_2$ be maximal connected sub-multiple conjugation quandles of $X$.
If $A_1 \cap A_2 \neq \emptyset$, 
then $A_1=A_2$.
\end{lemma}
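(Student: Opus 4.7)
The plan is to mirror exactly the argument used for Lemma \ref{empty or coincidence} in the pure quandle setting, substituting the multiple conjugation quandle versions of the supporting lemmas. The setup is the same: assume $A_1 \cap A_2 \neq \emptyset$ and aim to squeeze $A_1$ and $A_2$ between themselves by producing a connected sub-multiple conjugation quandle that contains both.

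The key step is to apply Lemma \ref{MCQ conn. gen. by union} to the family $\{A_1, A_2\}$. Since $A_1$ and $A_2$ are both connected sub-multiple conjugation quandles of $X$ and their intersection is nonempty by hypothesis, the lemma guarantees that $\langle A_1 \cup A_2 \ranglemcq$ is a connected sub-multiple conjugation quandle of $X$. This is the one nontrivial input; once it is in hand, everything else is formal.

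Next, I would observe that $A_1 \subseteq \langle A_1 \cup A_2 \ranglemcq$ and $A_2 \subseteq \langle A_1 \cup A_2 \ranglemcq$ by definition of the generated sub-multiple conjugation quandle. Since $A_1$ and $A_2$ are maximal connected sub-multiple conjugation quandles of $X$, and $\langle A_1 \cup A_2 \ranglemcq$ is a connected sub-multiple conjugation quandle of $X$ containing each of them, maximality forces $\langle A_1 \cup A_2 \ranglemcq = A_1$ and $\langle A_1 \cup A_2 \ranglemcq = A_2$, whence $A_1 = A_2$.

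I do not anticipate any real obstacle here: the entire content has been packaged into Lemma \ref{MCQ conn. gen. by union}, so the proof is essentially a one-line application of that lemma together with the definition of maximality. The only thing to be careful about is to cite the \emph{MCQ-generated} hull $\langle \cdot \ranglemcq$ rather than the quandle-generated hull $\langle \cdot \rangle$, since only the former is automatically a sub-multiple conjugation quandle.
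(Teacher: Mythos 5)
Your proposal is correct and coincides with the paper's own proof: both apply Lemma \ref{MCQ conn. gen. by union} to $\{A_1,A_2\}$ to see that $\langle A_1 \cup A_2 \ranglemcq$ is a connected sub-multiple conjugation quandle containing each $A_i$, and then conclude by maximality. No gaps.
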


\begin{proof}
Suppose that $A_1 \cap A_2 \neq \emptyset$.
By Lemma \ref{MCQ conn. gen. by union}, 
$\langle A_1 \cup A_2 \ranglemcq$ is 
a connected sub-multiple conjugation quandle of $X$.
Since $A_1$ and $A_2$ are included in $\langle A_1 \cup A_2 \ranglemcq$ 
and maximal connected sub-multiple conjugation quandles of $X$, 
we obtain $\langle A_1 \cup A_2 \ranglemcq =A_1=A_2$.
\end{proof}

\begin{theorem}\label{uniquely cccd}
Any multiple conjugation quandle has 
the unique maximal connected sub-multiple conjugation quandle decomposition.
\end{theorem}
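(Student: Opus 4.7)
The plan is to mimic the proof of the analogous Theorem~\ref{uniquely cccd} for quandles, replacing each ingredient by its multiple conjugation quandle counterpart. Everything that was used in the quandle case has already been set up: Lemma~\ref{MCQ conn. gen. by union} plays the role of Lemma~\ref{conn. gen. by union}, Lemma~\ref{MCQ empty or coincidence} plays the role of Lemma~\ref{empty or coincidence}, and the sub-multiple conjugation quandle generated by a set, denoted $\langle \cdot \ranglemcq$, is available.

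For existence, I would fix a multiple conjugation quandle $X$ and, for each $a\in X$, define $[a]:=\bigcup_{a\in W}W$, where the union is taken over all connected sub-multiple conjugation quandles $W$ of $X$ containing $a$. By Lemma~\ref{MCQ conn. gen. by union}, since all these $W$ share the common element $a$, the sub-multiple conjugation quandle $\langle [a] \ranglemcq$ they generate is connected. Suppose $A$ is a connected sub-multiple conjugation quandle of $X$ with $\langle [a] \ranglemcq \subset A$; then $a\in A$, so $A$ appears in the union defining $[a]$, whence $A\subset [a]\subset \langle [a] \ranglemcq$. Therefore $A=[a]=\langle [a] \ranglemcq$, and $[a]$ is a maximal connected sub-multiple conjugation quandle of $X$. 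By Lemma~\ref{MCQ empty or coincidence}, for any $a,b\in X$ one has $[a]\cap [b]=\emptyset$ or $[a]=[b]$, so picking one representative from each class $Y\subset X$ yields the disjoint decomposition $X=\bigsqcup_{a\in Y}[a]$ into maximal connected sub-multiple conjugation quandles.

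For uniqueness, suppose $X=\bigsqcup_{j\in J}B_j$ is another maximal connected sub-multiple conjugation quandle decomposition. Each $B_j$ contains some element $a$, so $B_j\cap [a]\neq\emptyset$, and Lemma~\ref{MCQ empty or coincidence} forces $B_j=[a]$. Thus the family $\{B_j\}_{j\in J}$ coincides with the family $\{[a]\}_{a\in Y}$, proving uniqueness.

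The only mild subtlety is to make sure the argument that $\langle [a] \ranglemcq$ is connected really does follow from Lemma~\ref{MCQ conn. gen. by union}: that lemma requires the collection of connected sub-multiple conjugation quandles whose union we form to have non-empty common intersection, which is satisfied here because every such piece contains $a$. Once this point is checked, the rest of the proof is a direct transcription of the quandle argument with the symbols $\sim_A$, $\langle \cdot \rangle$ replaced by $\simmcq_A$, $\langle \cdot \ranglemcq$ respectively, and the two supporting lemmas replaced by their MCQ analogues.
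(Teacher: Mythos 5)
Your proposal is correct and follows essentially the same route as the paper's own proof: the same definition $[a]=\bigcup_{a\in W}W$, the same use of Lemma \ref{MCQ conn. gen. by union} to see that $\langle [a]\ranglemcq$ is connected and hence that $[a]$ is maximal, and the same appeal to Lemma \ref{MCQ empty or coincidence} for disjointness and uniqueness. No gaps.
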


\begin{proof}
Let $X=\bigsqcup_{\lambda \in \Lambda}G_\lambda$ be a multiple conjugation quandle.
For any $a \in X$, 
we define $[a]:=\bigcup_{a \in W} W$,  
where $W$ is a connected sub-multiple conjugation quandle of $X$.
By Lemma \ref{MCQ conn. gen. by union}, 
$\langle [a] \ranglemcq$ is a connected sub-multiple conugation quandle of $X$.
Suppose that $A$ is a connected sub-multiple conjugation quandle of $X$ 
including $\langle [a] \ranglemcq$.
Since $A$ contains $a$, 
we have $A \subset [a]$, 
which implies that $A=[a]=\langle [a] \ranglemcq$.
Hence $[a]$ is a maximal connected sub-multiple conjugation quandle of $X$.
For any $a,b \in X$, 
we have $[a] \cap [b]= \emptyset$ or $[a]=[b]$ 
by Lemma \ref{MCQ empty or coincidence}. 
Therefore there exists a subset $Y$ of $X$ 
such that $X=\bigsqcup_{a \in Y}[a]$.
It is the maximal connected sub-multiple conjugation quandle decomposition of $X$.
Next, we show the uniqueness.
Let $B$ be a maximal connected sub-multiple conjugation quandle of $X=\bigsqcup_{a \in Y}[a]$.
Then there exists $a' \in Y$ such that $B \cap [a'] \neq \emptyset$.
By Lemma \ref{MCQ empty or coincidence}, 
we have $B=[a']$.
Therefore $X$ has the unique maximal connected sub-multiple conjugation quandle decomposition.
\end{proof}

For a multiple conjugation quandle $X$, 
it is easy to see that 
any connected sub-multiple conjugation quandle of $X$ 
is included in some connected component of $X$.
Therefore 
if a connected component of $X$ is 
a connected sub-multiple conjugation quandle of $X$, 
then it is a maximal connected sub-multiple conjugation quandle of $X$.

%
%
%
%

Let $X=\bigsqcup_{\lambda \in \Lambda}G_\lambda$ be 
a multiple conjugation quandle 
and let $\p(\Lambda)$ be the set 
of all subsets of $\Lambda$.
For any $\mathcal{A} \subset \p(\Lambda)$, 
we define 
$D(\mathcal{A}):=\bigcup_{\Lambda \in \mathcal{A}} \{ \orb_{\Lambda}(\mu) \mid \mu \in \Lambda \}$.
It is easy to see that 
$\bigcup_{\Lambda \in \mathcal{A}}\Lambda=\bigcup_{\Lambda \in D(\mathcal{A})}\Lambda$. 
We put $D^0(\mathcal{A}):=\mathcal{A}$ and 
$D^{k+1}(\mathcal{A}) := D(D^k(\mathcal{A}))$ for any $k \in \mathbb{Z}_{\geq 0}$.

\begin{theorem}\label{mcqcccd}
Let $X$ be a multiple conjugation quandle.
If there exists $n \in \mathbb{Z}_{\geq 0}$ 
such that $D^n(\{\Lambda \})=D^{n+1}(\{\Lambda \})$,
then $X=\bigsqcup_{\Lambda' \in D^n( \{ \Lambda \} )}
(\bigsqcup_{\lambda \in \Lambda'}G_\lambda)$ is 
the maximal connected sub-multiple conjugation quandle decomposition of $X$.
In particular, 
if $\Lambda$ is a finite set, 
then there exists $n \in \mathbb{Z}_{\geq 0}$ 
such that $X=\bigsqcup_{\Lambda' \in 
D^n( \{ \Lambda \} )}(\bigsqcup_{\lambda \in \Lambda'}G_\lambda)$ is 
the maximal connected sub-multiple conjugation quandle decomposition of $X$.
\end{theorem}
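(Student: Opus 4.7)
The plan is to mirror the proof of Theorem \ref{cccd}, adjusted to account for the group structure inside each $G_\lambda$ and Proposition \ref{subMCQ}. Assuming $D^n(\{\Lambda\})=D^{n+1}(\{\Lambda\})$, I would first check that for each $\Lambda' \in D^n(\{\Lambda\})$ the set $\bigsqcup_{\lambda \in \Lambda'}G_\lambda$ is a connected sub-multiple conjugation quandle of $X$: the stability $D(\{\Lambda'\})=\{\Lambda'\}$ says exactly that $\Lambda'$ is a single orbit under the inner automorphism action of $\bigsqcup_{\lambda \in \Lambda'}G_\lambda$ on itself, which is the definition of connectedness.

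For maximality, let $Y$ be any connected sub-MCQ of $X$ containing $\bigsqcup_{\lambda \in \Lambda'}G_\lambda$; by Proposition \ref{subMCQ}, $Y = \bigsqcup_{\mu \in M}H_\mu$ with each $H_\mu$ a subgroup of $G_\mu$ and $M \supseteq \Lambda'$. Setting $\Lambda_0 := \Lambda$, I would build inductively a descending chain $\Lambda_i \in D^i(\{\Lambda\})$ with $M \subseteq \Lambda_i$: given $\Lambda_i$ and $Z_i := \bigsqcup_{\lambda \in \Lambda_i}G_\lambda \supseteq Y$, the restriction of each $S_y$ $(y \in Y)$ to $Z_i$ lies in $\inn(Z_i)$, so every $\inn(Y)$-orbit on $M$ is contained in an $\inn(Z_i)$-orbit on $\Lambda_i$; connectedness of $Y$ then places $M$ inside a single such orbit $\Lambda_{i+1} \in D^{i+1}(\{\Lambda\})$. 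At step $n$ both $\Lambda'$ and $\Lambda_n$ lie in $D^n(\{\Lambda\})$ and intersect, so they coincide by disjointness, forcing $M = \Lambda'$; combined with $G_\lambda \subseteq H_\lambda \subseteq G_\lambda$ for each $\lambda \in \Lambda'$, this yields $Y = \bigsqcup_{\lambda \in \Lambda'}G_\lambda$. The finite case is then immediate: the cardinalities $\#D^i(\{\Lambda\})$ form a weakly increasing sequence bounded above by $\#\Lambda$, hence eventually stabilize.

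The step I expect to require the most care is the inductive restriction claim, namely that $S_y|_{Z_i}$ is an element of $\inn(Z_i)$ whenever $y \in Y \subseteq Z_i$. This relies on the fact that any sub-MCQ is closed under $*^{-1}$ as well as $*$: in a multiple conjugation quandle one has $a *^{-1} b = a * b^{-1}$, and each component $H_\mu$ of a sub-MCQ is closed under inversion by Proposition \ref{subMCQ}. With this in place, $S_y^{-1} = S_{y^{-1}}$ also restricts to $Z_i$, so $S_y|_{Z_i}$ is a genuine automorphism of $Z_i$ generated by the element $y \in Z_i$. The remainder of the argument is a direct translation of the quandle proof.
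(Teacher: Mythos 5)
Your proposal is correct and follows essentially the same route as the paper's proof: establish connectedness of each $\bigsqcup_{\lambda\in\Lambda'}G_\lambda$ from the stabilization $D^n(\{\Lambda\})=D^{n+1}(\{\Lambda\})$, then trap any connected sub-MCQ $Y=\bigsqcup_{\mu\in M}H_\mu$ containing it inside a descending chain of orbits $\Lambda_i\in D^i(\{\Lambda\})$ with $M\subseteq\Lambda_i$, and conclude $M=\Lambda'$ and $H_\lambda=G_\lambda$ at step $n$. Your extra care about $S_y|_{Z_i}\in\inn(Z_i)$ and closure under $*^{-1}$ (via $S_b^{-1}=S_{b^{-1}}$) only makes explicit what the paper leaves implicit.
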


\begin{proof}
Suppose that there exists $n \in \mathbb{Z}_{\geq 0}$ 
such that $D^n(\{ \Lambda \})=D^{n+1}(\{ \Lambda \})$.
For any $\Lambda' \in D^n(\{ \Lambda \})$, 
$\Lambda'$ has only one orbit $\Lambda'$, 
that is, $\bigsqcup_{\lambda \in \Lambda'}G_\lambda$ is 
a connected sub-multiple conjugation quandle of $X$.
Let $Y=\bigsqcup_{\lambda \in M}H_\lambda$ be 
a connected sub-multiple conjugation quandle of $X$ 
including $\bigsqcup_{\lambda \in \Lambda'}G_\lambda$, 
where $H_\lambda$ is a subgroup of $G_\lambda$ 
for any $\lambda \in M$, 
and let $\Lambda_0=\Lambda$.
Then 
there exists an orbit of $\Lambda_0$ 
by the action of $\inn (\bigsqcup_{\lambda \in \Lambda_0}G_\lambda)$
including $M$.
We denote by $\Lambda_1 \in D^1(\{ \Lambda \})$ the orbit.
For any $i \in \mathbb{Z}_{\geq 0}$, 
we denote by $\Lambda_{i+1} \in D^{i+1}(\{\Lambda \})$ 
an orbit of $\Lambda_i$ 
by the action of $\inn (\bigsqcup_{\lambda \in \Lambda_i}G_\lambda)$
including $M$ inductively.
Then we have $\Lambda' \subset M \subset \Lambda_n$.
Since $\Lambda', \Lambda_n \in D^n(\{ \Lambda \})$, 
we have $\Lambda'=M=\Lambda_n$ 
and $Y=\bigsqcup_{\lambda \in \Lambda'}H_\lambda$.
Since $H_\lambda$ is a subgroup of $G_\lambda$ 
for any $\lambda \in \Lambda'$, 
we have $Y=\bigsqcup_{\lambda \in \Lambda'}G_\lambda$.
Therefore $\bigsqcup_{\lambda \in \Lambda'}G_\lambda$ is 
a maximal connected sub-multiple conjugation quandle of $X$, 
and  $X=\bigsqcup_{\Lambda' \in D^n( \{ \Lambda \} )}(\bigsqcup_{\lambda \in \Lambda'}G_\lambda)$ is 
the maximal connected sub-multiple conjugation quandle decomposition of $X$.
Next, let $\Lambda$ be a finite set.
For any $i \in \mathbb{Z}_{\geq 0}$, 
we have $\# D^i(\{\Lambda \}) \leq \# D^{i+1}(\{\Lambda \}) \leq \# \Lambda$.
Since $\# \Lambda$ is finite, 
there exists $n \in \mathbb{Z}_{\geq 0}$ 
such that $\# D^n(\{\Lambda\}) = \# D^{n+1}(\{\Lambda\})$, 
which implies that $D^n(\{\Lambda\}) = D^{n+1}(\{\Lambda\})$.
Therefore 
$X=\bigsqcup_{\Lambda' \in D^n( \{ \Lambda \} )}(\bigsqcup_{\lambda \in \Lambda'}G_\lambda)$ is 
the maximal connected sub-multiple conjugation quandle decomposition of $X$.
\end{proof}

\section*{Acknowledgment}
The authors would like to thank Atsushi Ishii 
for his helpful comments and suggestions.



\begin{thebibliography}{99}
\bibitem{EGTY08}
G. Ehrman, A. Gurpinar, M. Thibault and D. N. Yetter, 
\textit{Toward a classification of finite quandles},
J. Knot Theory Ramifications \textbf{17}(2008), 511--520.

\bibitem{Grana04}
M. Gra$\tilde{\mathrm{n}}$a, 
\textit{Indecomposable racks of order $p^2$},
Beitr$\ddot{\mathrm{a}}$ge Algebra Geom. \textbf{45}(2004), 665--767.

\bibitem{IIJO13}
A. Ishii, M. Iwakiri, Y. Jang, and K. Oshiro, 
\textit{A $G$-family of quandles and handlebody-knots}, 
Ill. J. Math. \textbf{57}(2013), 817--838.

\bibitem{Ishii15}
A. Ishii,
\textit{A multiple conjugation quandle and handlebody-knots},
Topology Appl. \textbf{196}(2015), 492--500.

\bibitem{Joyce82}
D. Joyce,
\textit{A classifying invariant of knots, the knot quandle},
J. Pure Appl. Alg. \textbf{23}(1982), 37--65.

\bibitem{Matveev82}
S. V. Matvee,
\textit{Distributive groupoids in knot theory},
Mt. Sb. (N.S.)  \textbf{119(161)}(1982), 78--88.

\bibitem{Nelson03}
S. Nelson,
\textit{Classification of finite Alexander quandles},
Topology Proceedings \textbf{27}(2003), 245--258.

\bibitem{NG08}
S. Nelson and G. Murillo,
\textit{Alexander quandles of order 16},
J. Knot Theory Ramifications \textbf{17}(2008), 273--278.

\bibitem{NW06}
S. Nelson and C.-Y. Wong,
\textit{On the orbit decomposition of finite quandles},
J. Knot Theory Ramifications \textbf{15}(2006), 761--772.
\end{thebibliography}
\end{document}